\tikzstyle{vertex}=[circle, draw, inner sep=0pt, minimum size=6pt]
\newcommand{\revision}[1]{{\color{black} #1}}
\numberwithin{equation}{section}
\theoremstyle{plain} 
\newtheorem{theorem}{Theorem}[section]
\newtheorem{lemma}[theorem]{Lemma}
\newtheorem{proposition}[theorem]{Proposition}
\newtheorem{corollary}[theorem]{Corollary}
\theoremstyle{remark}
\newtheorem{definition}[theorem]{Definition}
\newtheorem{example}[theorem]{Example}
\newtheorem{remark}[theorem]{Remark}
\newcommand{\bthe}{\begin{theorem}}
\newcommand{\ethe}{\end{theorem}}
\newcommand{\ben}{\begin{enumerate}}
\newcommand{\een}{\end{enumerate}}
\newcommand{\bit}{\begin{itemize}}
\newcommand{\eit}{\end{itemize}}
\newcommand{\beq}{\begin{equation}}
\newcommand{\eeq}{\end{equation}}
\newcommand{\ble}{\begin{lemma}}
\newcommand{\ele}{\end{lemma}}
\newcommand{\bde}{\begin{definition}\rm}
\newcommand{\ede}{\halmos\end{definition}}
\newcommand{\bco}{\begin{corollary}}
\newcommand{\eco}{\end{corollary}}
\newcommand{\bpr}{\begin{proposition}}
\newcommand{\epr}{\end{proposition}}
\newcommand{\brem}{\begin{remark}\rm}
\newcommand{\erem}{\end{remark}}
\newcommand{\bproof}{\begin{proof}}
\newcommand{\eproof}{\end{proof}}
\newcommand{\bexam}{\begin{example}\rm}
\newcommand{\eexam}{\end{example}}
\newcommand{\bfi}{\begin{fig}}
\newcommand{\efi}{\end{fig}}
\newcommand{\btab}{\begin{tab}}
\newcommand{\etab}{\end{tab}}
\newcommand{\beao}{\begin{eqnarray*}}
\newcommand{\eeao}{\end{eqnarray*}\noindent}
\newcommand{\balo}{\begin{align*}}
\newcommand{\ealo}{\end{align*}}
\newcommand{\balm}{\begin{align}}
\newcommand{\ealm}{\end{align}\noindent}
\newcommand{\beam}{\begin{eqnarray}}
\newcommand{\eeam}{\end{eqnarray}\noindent}
\newcommand{\barr}{\begin{array}}
\newcommand{\earr}{\end{array}}
\newcommand{\E}{\mathbb{E}}
\newcommand{\N}{\mathbb{N}}
\renewcommand\P{\mathbb{P}}
\newcommand{\R}{\mathbb{R}}
\newcommand{\MO}{\mathrm{MO}}
\newcommand{\MDA}{\mathrm{MDA}}
\def\II{\boldsymbol{I}}
\def\bzero{\boldsymbol 0}
\def\bone{\boldsymbol 1}
\def\bM{\boldsymbol M}
\def\bX{\boldsymbol X}
\def\bZ{\boldsymbol Z}
\def\be{\boldsymbol e}
\def\bv{\boldsymbol v}
\def\bx{\boldsymbol x}
\def\by{\boldsymbol y}
\def\bz{\boldsymbol z}
\def\MRV{\mathcal{MRV}}
\def\RV{\mathcal{RV}}
\newcommand{\VaR}{\mathrm{VaR}}
\newcommand{\CoVaR}{\mathrm{CoVaR}}
\newcommand{\CTP}{\mathrm{CTP}}
\newcommand{\leftinv}{{\leftarrow}}
\newcommand{\ov}{\overline}
\newcommand{\vague}{\stackrel{\lower0.2ex\hbox{$\scriptscriptstyle
                    \it{v} $}}{\rightarrow}}
\newcommand{\weak}{\stackrel{\lower0.2ex\hbox{$\scriptscriptstyle
                    \it{w} $}}{\rightarrow}}
\newcommand{\what}{\stackrel{\lower0.2ex\hbox{$\scriptscriptstyle
                    \it{\hat{w}} $}}{\rightarrow}}
\newcommand{\eqdis}{\stackrel{\lower0.2ex\hbox{$\scriptscriptstyle
                    \mathrm{d}$}}{=}}
\newcommand{\distr}{\stackrel{\lower0.2ex\hbox{$\scriptscriptstyle
                    \it{d} $}}{\rightarrow}}
\begin{document}

\begin{frontmatter}

\title{Asymptotic independence in higher dimensions \vspace*{0.2cm}\\  and its implications on risk management \vspace*{0.8cm}}

\runtitle{Asymptotic independence}

\begin{aug}
  \author[A]{\fnms{Bikramjit} \snm{Das}\ead[label=e1]{bikram@sutd.edu.sg}\orcid{0000-0002-6172-8228}}
    \and
   \author[B]{\fnms{Vicky} \snm{Fasen-Hartmann}\ead[label=e2]{vicky.fasen@kit.edu}\orcid{0000-0002-5758-1999}}
 \address[A]{Engineering Systems and Design, Singapore University of Technology and Design  \printead[presep={,\ }]{e1}}

   \address[B]{Institute of Stochastics, Karlsruhe Institute of Technology\printead[presep={,\ }]{e2}}


  \runauthor{B. Das and  V. Fasen-Hartmann}
\end{aug}

\begin{abstract}
   In the study of extremes, the presence of \emph{asymptotic independence} signifies that extreme events across multiple variables are probably less likely to occur together. Although 
well-understood in a bivariate context, the concept remains relatively unexplored when addressing the nuances of the joint occurrence of extremes in higher dimensions. In this paper, we propose a notion of \emph{mutual asymptotic independence} to capture the behavior of joint extremes in dimensions larger than two and contrast it with the classical notion of \emph{(pairwise) asymptotic independence}. Additionally, we define \emph{$k$-wise asymptotic independence}, which captures the tail dependence between pairwise and mutual asymptotic independence. The concepts are compared using examples of Archimedean, Gaussian and Marshall-Olkin copulas, among others.
\revision{Finally}, we discuss the implications of these new notions of asymptotic independence on assessing the risk of complex systems under distributional ambiguity.


\end{abstract}

\begin{keyword}[class=AMS]
\kwd[Primary ]{62H05} 
\kwd{62H20} 
\kwd{62E20} 
\kwd[; Secondary ]{62G32, 62P05} 
\end{keyword}

\begin{keyword}
\kwd{asymptotic independence}
\kwd{copula models}
\kwd{Gaussian copula}
\kwd{multivariate extremes}
\kwd{risk contagion}
\end{keyword}

\end{frontmatter}

\section{Introduction}\label{sec:intro}
In many multivariate models, we observe that the likelihood of the joint occurrence of extreme values in two or more variables is negligible in comparison to the occurrence of an extreme value in one variable. In this context, the notion of \emph{asymptotic independence} looms large
in the study of joint extreme values in probability distributions, although mostly restricted to the bivariate set-up.  A random vector $(Z_1,Z_2)\in \R^2$ with identically distributed marginals  is \emph{asymptotically (right-tail/upper-tail) independent} if
\begin{align}\label{eq:bvasyind}
   \P(Z_1>t, Z_2>t) = o(\P(Z_1>t)), \quad t\to \infty,
\end{align}
or equivalently $\P(Z_1>t|Z_2>t) \to 0$ as $t\to\infty$. For the rest of this paper, we focus only on extremes in the non-negative quadrant and drop the terms \emph{right/upper-tail} for convenience.

Often called \emph{Sibuya's condition}, \eqref{eq:bvasyind} was exhibited by \cite{sibuya:1960} for bivariate normal random vectors with any correlation $\rho<1$. Such a limit behavior has also been found to hold for bivariate distributions with arbitrary choice of marginals possessing a variety of dependence structures, including Frank copula, Ali-Mikhail-Haq copula, Gaussian copula, Farlie-Gumbel-Morgenstern copula, and more; see \cite{ledford:tawn:1996,ledford:tawn:1998, coles:heffernan:tawn:1999, heffernan:2000}.
It is widely believed that the presence of asymptotic independence hinders the computation of joint tail probabilities, and has led to a variety of techniques for modeling and estimating rare tail probabilities when such a property is present; see \cite{ledford:tawn:1996,resnick:2002,ramos:ledford:2009,lehtomaa:resnick:2020,das:embrechts:fasen:2013,das:fasen:kluppelberg:2022}.  Nevertheless, for random vectors in dimensions higher than two, limited expositions are available, and multivariate asymptotic independence is often understood to be \eqref{eq:bvasyind} holding for all pairs of variables, which we call \emph{pairwise asymptotic independence}. \revision{Such a notion of multivariate asymptotic independence possibly has its origins in the study of extremes.  For instance,  in \citet[Chapter 5.5]{resnickbook:2008}, a multivariate distribution is called \emph{multivariate asymptotically independent} if it is in the maximum domain of attraction of a multivariate extreme value distribution with independent marginals. Additionally, it is shown that such a characterization of ``multivariate asymptotic independence'' is equivalent to having ``pairwise asymptotic independence'' (assuming identical marginals in the maximum domain of attraction of a univariate extreme value distribution); see \citet[Proposition 5.27]{resnickbook:2008}, \citet[Corollary 5.3.1]{galambos:1978}. In this paper, we show that asymptotic tail independence often may have a much subtler form that goes beyond pairwise asymptotic independence (multivariate asymptotic independence).}

Asymptotic independence for bivariate joint tails is also popularly understood using the coefficient of tail dependence  $\eta$ defined in \citet{ledford:tawn:1996}.  If $Z_1, Z_2$ are identically unit Fr\'echet distributed with distribution function $F(z)=\text{e}^{-1/z},$ $z>0$, and 
  \begin{align*}
    \P(Z_1>t,Z_2>t) = t^{-1/\eta}\ell(t), \quad \quad t\to \infty,  
  \end{align*}
  where $1/2\le \eta <1$ and $\ell$ is slowly varying at infinity (i.e., $\ell(tz)/\ell(z) \to 1$ as $t\to\infty$, $\forall\,z>0$), 
  then $\eta$ represents this \textit{coefficient of tail dependence}. According to \citet{ledford:tawn:1996}, (i) $\eta=1/2$ and $\ell(t)\geq 1$ signifies \textit{near independence}, (ii) $\eta=1$ and $\ell(t)\nrightarrow 0$ as $t\to\infty$ signifies \textit{upper tail dependence}, and finally, (iii) either $1/2<\eta<1$, or $\eta =1$ and $\ell(t)\to 0$ as $t\to \infty$ signifies \textit{positive association}.

 


 The coefficient of tail dependence is a 2-dimensional concept and has been extended to $d$-dimensions as \textit{upper tail order} by \citet{hua:joe:2011} through the survival copula.
Prior to further discussions, we recall the notions of copula and survival copula. 

A copula $C:[0,1]^d \to [0,1]$ is a multivariate distribution function with identical uniform $[0,1]$ marginals. From 
Sklar's Theorem \cite{sklar:1959,nelsen:2006,Durante:2016} we know that for any $d$-dimensional random vector $\bZ=(Z_1, \ldots, Z_d)$ with distribution function $F$ and marginal distributions $F_1,\ldots, F_d$ there exists a copula $C:[0,1]^d \to [0,1]$  such that 
\begin{align*}
    F(z_1,\ldots,z_d)
    = C(F_1(z_1), \ldots, F_d(z_d)) \quad 
\end{align*}
 for  $(z_1,\ldots,z_d)\in \R^d$, and if the marginals are continuous, the copula is uniquely given by
\begin{eqnarray*}
    C(u_1,\ldots,u_d)=F(F_1^{\leftarrow}(u_1),\ldots,F_d^{\leftarrow}(u_d))
\end{eqnarray*}
 for $0< u_1,\ldots, u_d < 1$, where $$F_j^{\leftarrow}(u_j):=\inf\{z\in\R:F_j(z)\geq u_j \}$$ is the generalized inverse of $F_j$  for  $j=1,\ldots,d$.  In this paper, we are particularly concerned with the probability of joint extremes where the survival copula  $\widehat{C}:[0,1]^d \to [0,1]$, which is also a copula,  plays an important role; \revision{see \citet[Chapter 1]{Durante:2016}, \citet[Section 5.1.5]{mcneil:frey:embrechts:2015}, \citet[Section 2.6]{nelsen:2006}}. The survival copula $\widehat{C}$ satisfies  
\begin{align*}
   \P(Z_1 > z_1,\ldots, Z_d > z_d) & = \widehat{C}(\overline{F}_1(z_1), \ldots, \overline{F}_d(z_d)),
\end{align*}
for $(z_1,\ldots,z_d)\in \R^d$, where $\overline{F}_j=1-F_j$ is the tail function of $F_j$ for $j=1,\ldots,d$. Of course, the survival copula and the copula are directly related through 
\begin{eqnarray*}
    \widehat C(u_1,\ldots,u_d)=1+\sum_{S\subseteq \{1,\ldots,d\}\atop S\neq \emptyset}
        (-1)^{|S|}C_S(1-u_j:j\in S)
\end{eqnarray*}
for $0\leq u_1,\ldots, u_d \leq 1$,
where $|S|$ is the cardinality of the set $S$ and $C_S$ is the appropriate $|S|$-dimensional marginal copula of $C$. In dimension $d=2$, this reduces to
\begin{eqnarray*}
    \widehat C(u_1,u_2)=u_1+u_2-1+C(1-u_1,1-u_2),
\end{eqnarray*}
for $0\leq u_1,u_2\leq 1.$

Returning back to notions of \emph{tail dependence}, if a $d$-dimensional survival copula $\widehat{C}$ satisfies
  \begin{align}\label{eq:uppertailorder}
       \widehat{C}(u,\ldots,u) = u^{\kappa}\ell(u), \quad 0\leq u\leq 1,
  \end{align}
  for some slowly varying function $\ell$ at 0 
  and some constant $\kappa>0$, then $\kappa$ is called the \textit{upper tail order}. Here, (i) the case $\kappa=d$ signifies \textit{near (asymptotic) independence} (for $d=2$, we have $\kappa=1/\eta$), (ii) the case $\kappa=1$ and  $\ell(u)\nrightarrow 0$ as $u\downarrow0$ signifies \textit{(asymptotic) upper tail dependence}, and, (iii) the case where $1<\kappa<d$ is called  \textit{upper intermediate tail dependence} in  \citet{hua:joe:2011}. From the definition of \emph{tail order}, we can see that for $d=2$, the survival copulas in both the cases of ``near independence'' and ``upper intermediate tail dependence''  exhibit asymptotic independence in the sense of \eqref{eq:bvasyind}; in this paper, we gain a better understanding of these ideas when $d>2$.

Note that for \emph{independence} of multiple random variables, it is well-known that ``pairwise independence'' for all pairs of random variables is not equivalent to their ``mutual independence'' (cf. \citet[Chapter 2]{hogg:mckean:craig:2013}). In the same vein, we propose here the concepts of  \emph{pairwise asymptotic independence} in \Cref{sec:pai} and \emph{mutual asymptotic independence} in \Cref{sec:mai}. With the new notion of mutual asymptotic independence, we explore the ideas of  ``near independence'' and ``intermediate upper tail dependence'' through all subsequent dimensions $2,3,\ldots,d$ going beyond just the $d$-dimensional characterization as given in \eqref{eq:uppertailorder}. 
{For models that lie between pairwise and mutually asymptotically independent models, we introduce the concept of \emph{$k$-wise asymptotic independence} for $k \in \{2, \ldots, d\}$ in \Cref{sec:kwiseasyind}.}
In particular,  we investigate and compare the various notions of \emph{asymptotic independence} and illustrate them using popular copula models.  Moreover, we obtain the following three key results for the popular Gaussian copula, which have broader theoretical and practical implications: 
\begin{itemize}
\item[(i)] a formulation of precise necessary and sufficient conditions for mutual asymptotic independence to hold,
\item[(ii)] a derivation of the correct tail orders, and,
\item[(iii)] the existence of Gaussian copula models exhibiting $k$-wise asymptotic independence but not $(k+1)$-wise asymptotic independence.
\end{itemize}
 Besides the Gaussian copula, we also provide examples to exhibit the breadth of asymptotic (in)dependence behavior using the Archimedean copula family.    
{We apply the new notions of asymptotic independence in \Cref{sec:dro} to show its implications on assessing the risk of complex systems under distributional ambiguity.
The different notions of asymptotic independence influence risk contagion in financial systems differently and hence may lead to an underestimation or overestimation of risk if applied improperly}. In particular, we exhibit this phenomenon using two pertinent conditional risk measures, namely, conditional tail probabilities and {Contagion Value-at-Risk or \revision{Conditional Value-at-Risk (CoVaR)}} in dimensions $d>2$. Finally, in \Cref{sec:concl}, we conclude with some broader implications of interpreting asymptotic independence in this new light. All proofs for the results presented in this paper are provided in the Appendix. 

\subsection*{Notations} 
We denote by $\mathbb{I}_d=\{1,\ldots,d\}$ an index set with $d$ elements and the cardinality of a set $S\subseteq \mathbb{I}_d$ is denoted by $|S|$. 
For a random vector $\bZ=(Z_1,\ldots,Z_d)$, we write $\bZ\sim F$ if $\bZ$ has distribution function $F$; moreover, we understand that marginally $Z_j\sim F_j$ for $j\in \mathbb{I}_d$. For any non-empty sets $S\subseteq\mathbb{I}_d$, the copula and survival copula of the corresponding $|S|$-dimensional marginal are denoted by $C_S$ and $\widehat{C}_S$, respectively. Moreover, if $d=1$ we have $C_S(u)=\widehat{C}_S(u)=u$ for $0\le u \le 1$. For a given vector $\bz\in\R^d$ and $S\subseteq \mathbb{I}_d$, we denote by  $\bz^{\top}$ the transpose of $\bz$ and by $\bz_S\in\R^{|S|}$ the vector obtained by deleting the components of $\bz$ in $\mathbb{I}_d\backslash S$. Similarly, for non-empty $S\subseteq\mathbb{I}_d$, $\Sigma_{S}$ denotes the appropriate sub-matrix of a given matrix $\Sigma\in\R^{d\times d}$ after removing all rows and columns with indices in $\mathbb{I}_d\setminus S$. Furthermore,  $\bzero_d=(0,\ldots,0)^\top$ and $\bone_d=(1,\ldots,1)^\top$ are vectors in $\R^d$, and $\II_d$ is the identity matrix in $\R^{d\times d}$; subscripts are dropped when evident from the context.
Vector operations are understood component-wise, e.g., for vectors $\bz=(z_1,\ldots,z_d)$ and $\by=(y_1,\ldots,y_d)$, $\bz\le \by$ means $z_j\le y_j$,  $\forall j\in \mathbb{I}_d$. 
For functions $f,g:(0,\infty)\to (0,\infty)$, we write $f(u)\sim g(u)$ as $u\downarrow0$ if $\lim_{u\downarrow 0}f(u)/g(u)=1.$ Moreover, a function $\ell:(0,\infty)\to(0,\infty)$ 
is called a \emph{slowly varying at $0$}, if $\lim_{u\downarrow0}\ell(uz)/\ell(u)=1, \forall z>0$.


\section{Pairwise asymptotic independence} \label{sec:pai}
Note that the definition in \eqref{eq:bvasyind} can be easily generalized to distributions with potentially unequal marginals; any random vector $(Z_1, Z_2)$ with continuous marginals $Z_j\sim F_j,$ $ j=1, 2$ is  asymptotically independent if
\begin{align}\label{eq:bvasyindcop}
    \widehat{C}(u,u) & = \P(F_1(Z_1)>1-u, F_2(Z_2)>1-u) \ = o(u), \quad u \downarrow 0,
\end{align}
where $\widehat{C}$ is the survival copula of $F$. 
Note that limit properties in \eqref{eq:bvasyind} and \eqref{eq:bvasyindcop} remain equivalent when the marginals of $(Z_1, Z_2)$ are \emph{completely tail equivalent}, i.e., $\P(Z_1>t)/\P(Z_2>t) \to 1$ as $t\to\infty$.  
Although not all extreme sets are of this form, this definition has been a key concept in the modeling of joint extremes.

An interesting feature of this definition of asymptotic independence is that it is based on tail sets tethered along the main diagonal $(t,t)$ (in \eqref{eq:bvasyind}) or $(1-u,1-u)$ (in \eqref{eq:bvasyindcop}). It is easy to check that \eqref{eq:bvasyindcop} is equivalent to
\[\widehat{C}(au,bu) = o(u), \quad u\downarrow 0,\]
for some  $a,b >0$ (\citet[Theorem 2]{balkema:nolde:2010a}). Curiously, an equivalent result for the distribution function of a bivariate random vector does not hold: even if \eqref{eq:bvasyind} holds it does not necessarily hold for diagonals of the form $(at,bt)$ for any $a, b >0$; see \citet[Proposition 3.9]{das:fasen:2023a} for an example with normally distributed marginals $(Z_1, Z_2)$ where $\P(Z_1>at, Z_2>t)=O(\P(Z_2>t)),$ as $t\to\infty$. 

 Although \eqref{eq:bvasyind} and \eqref{eq:bvasyindcop} are widely applied for bivariate random vectors, a proper multivariate characterization of \emph{asymptotic independence} has been relatively scarce. 
 A definition often used and based on all pairwise comparisons following \eqref{eq:bvasyindcop} is given next.
 \begin{definition}[Pairwise asymptotic independence]\label{def:pairai}
     A random vector $\bZ \in \R^d$ with \revision{ continuous marginal distributions and} survival copula $\widehat C$ is \textit{pairwise asymptotically independent }if $\forall\, j, \ell \in \mathbb{I}_d, j\neq \ell,$
   \begin{align}\label{eq:pairasyindcop}
    \widehat{C}_{\{j,\ell\}}(u,u) = o(u), \quad\quad u \downarrow 0. 
\end{align}
If $\bZ\sim F$ has copula $C$, we interchangeably say $\bZ, F, C$ or $\widehat{C}$ exhibits pairwise asymptotic independence. 
 \end{definition}
\revision{
\begin{remark} $\mbox{}$
\begin{enumerate}[(a)]
\item 
Note that for discrete distributions, the copula is no longer necessarily unique, so we cannot extend this definition straightforwardly. Therefore, we assumed continuous margins to receive the uniqueness of the copula. However,  in this paper, we are concerned with tail probabilities. 
Hence, it suffices to have the marginal distributions to be continuous above a fixed threshold (i.e., they are eventually continuous). For convenience, since we are primarily concerned with the asymptotic behavior of the survival copula, we will assume for the rest of the paper that all distributions have continuous marginal distributions.

\item    Note that in contrast to asymptotic independence, asymptotic upper tail dependence occurs for a $d$-dimensional survival copula $\widehat{C}$ with $d\ge2$ if 
     \begin{align}\label{eq:asydep}
     \lim_{u\downarrow 0}\frac{\widehat{C}(u,\ldots,u)}{u}= \revision{\lambda\in(0,1)}.
     \end{align}
    Obviously,  \eqref{eq:asydep} implies that \eqref{eq:pairasyindcop} cannot hold. With respect to the upper tail order defined in \eqref{eq:uppertailorder},  \eqref{eq:asydep} is equivalent to saying $\kappa=1$ and $\ell(u) \nrightarrow 0$ as $u\downarrow0$.
\end{enumerate}
\end{remark}
}

\subsection{Examples}
Pairwise asymptotic independence exists in many multivariate distributions. We note a few examples here. 
 \begin{example}[Independence]\label{ex:indpair} 
    If all components of a random vector $\bZ\in \R^d$  are independent, then of course,
    \begin{eqnarray*}
         \P(Z_1 > z_1,\ldots, Z_d > z_d) & = &\prod_{j=1}^d\ov F_j(z_j)
                 = \widehat C^{\text{ind}}(\ov F_1(z_1),\ldots,\ov F(z_d)) 
    \end{eqnarray*}
    for $(z_1,\ldots,z_d)\in\R^d$,  where $C^{\text{ind}}:[0,1]^d\to[0,1]^d$ is the \textit{independence copula} given by
    \begin{eqnarray} \label{independence copula}
        C^{\text{ind}}(u_1,\ldots,u_d)=\prod_{j=1}^du_j
    \end{eqnarray}
    for $0\leq u_1,\ldots, u_d \leq 1$ with survival copula 
    $\widehat C^{\text{ind}}(u_1,\ldots,u_d)=C^{\text{ind}}(u_1,\ldots,u_d)$. For any distinct $j,\ell \in \mathbb{I}_d$ the $(j,l)$ marginal survival copula is as well
    \[\widehat{C}_{\{j,\ell\}}^{\text{ind}}{(u_1,u_2)= u_1u_2, \quad 0\le u_1, u_2 \le 1}.\]
   Thus, clearly, \eqref{eq:pairasyindcop} holds, and hence, the independence copula exhibits pairwise asymptotic independence. 
\end{example}

\begin{example}[Marshall-Olkin dependence]\label{ex:mo:pai}
    The Marshall-Olkin distribution is used in reliability theory to capture the failure of subsystems in a networked system. Here we consider a particular Marshall-Olkin dependence; cf. \cite{lin:li:2014,das:fasen:2023}. Assume that for every non-empty set $S\subseteq \mathbb{I}_d$  there exists a parameter $\lambda_S>0$ and  $\Lambda:=\{\lambda_S: \emptyset\neq S\subseteq\mathbb{I}_d\}$. Then the generalized Marshall-Olkin  (MO) survival copula with rate parameter $\Lambda$ is given by
\begin{align}\label{eq:scop:mo}
   \widehat{C}^{\MO(\Lambda)}(u_1,\ldots,u_d) = \prod_{i=1}^d \prod_{|S|=i} \bigwedge_{j\in S} u_j^{\eta_j^S}
\end{align}
for $0\leq u_1,\ldots, u_d \leq 1$, where
\begin{align}\label{eq:MO:eta}
    \eta_j^S = {\lambda_S}\Big/{\Big(\sum\limits_{J \supseteq \{j\} } \lambda_J}\Big), \quad j\in S \subseteq \mathbb{I}_d.
\end{align}
\revision{For any distinct $j,\ell\in \mathbb{I}_d$, we can compute that
\begin{align*}
     \widehat{C}^{\MO(\Lambda)}_{\{j,\ell\}}(u,u)  = u^{\eta^*_{j\ell}}
\end{align*}
   with 
   \begin{align} \label{eq:eta*}
    \eta^*_{j\ell} & = \sum_{S\subseteq \mathbb{I}_d\atop  j\in S, \ell\notin S} \eta_j^S + \sum_{S\subseteq \mathbb{I}_d\atop  j\notin S, \ell\in S} \eta_{\ell}^S + \sum_{S\subseteq \mathbb{I}_d\atop   j,\ell \in S} \max\{\eta_j^S, \eta_\ell^S\}\revision{>1}.
\end{align}
Clearly, since $\eta_{j\ell}^*>1, \forall j\neq \ell$, $\widehat{C}^{\MO(\Lambda)}$  possesses pairwise asymptotic independence for any choice of 
    $\Lambda$; see as well \cite[Proposition 3]{lin:li:2014}. An even larger class of Marshall-Olkin copulas have been introduced in  \citet{lin:li:2014} which are also pairwise asymptotically independent.
}

Although $\lambda_S$ is allowed to take any positive value for non-empty $S\subset \mathbb{I}_d$,  we discuss below two particular interesting choices of the parameters, cf. \citet[Example 2.14]{das:fasen:2023}. 
\begin{enumerate}[(a)]
    \item Equal parameter for all sets: Here, $\lambda_S=\lambda$ for all non-empty $S\subseteq\mathbb{I}_d$ where $\lambda>0$ and we denote the survival copula by $ \widehat{C}^{\MO^{=}}$. We can check from \eqref{eq:MO:eta} that the value of $\lambda$ is irrelevant here. Clearly in this case $\eta_j^S = 1/2^{d-1},$ for all $j\in S$ and non-empty  $S\subset \mathbb{I}_d$. Hence we can compute the value of \revision{$\eta^*_{j\ell}$ } defined in \eqref{eq:eta*} as
    \begin{align*}
        \revision{\eta^*_{j\ell}} & \revision{= \eta^*_{12}}=\sum_{S\subseteq \mathbb{I}_d\atop  1\in S} \frac{1}{2^{d-1}} + \sum_{S\subseteq \mathbb{I}_d\atop  1\notin S, 2\in S}\frac{1}{2^{d-1}} =   \frac{2^{d-1}+2^{d-2}}{2^{d-1}} = \frac 32.
    \end{align*}
    Therefore, for all $j, \ell \in S$ with $j\not=\ell$,
    \begin{align*}
     \widehat{C}^{\MO^{=}}_{\{j,\ell\}}(u,u) & = u^{3/2}, \quad 0\le u \le 1.
\end{align*}
    \item Parameters proportional to the cardinality of the sets: Here, $\lambda_S=|S|\lambda$ for all non-empty $S\subseteq\mathbb{I}_d$ where $\lambda>0$ and we denote the survival copula by $ \widehat{C}^{\MO^{\infty}}$. As well the value of $\lambda$ is irrelevant and  for all $j\in S$ and non-empty subset $S\subset \mathbb{I}_d$ we have
    $$\eta_j^S = \frac{|S|}{(d+1)2^{d-2}}.$$ We compute again the value of \revision{$\eta^*_{j\ell}$ } defined in \eqref{eq:eta*} as
     \begin{align*}
        \revision{\eta^*_{j\ell} } & =\eta^*_{12}= \sum_{S\subseteq \mathbb{I}_d\atop  1\in S} \frac{|S|}{(d+1)2^{d-2}} + \sum_{S\subseteq \mathbb{I}_d\atop  1\notin S, 2\in S} \frac{|S|}{(d+1)2^{d-2}}\\
               & =   \frac{(d+1)2^{d-2}+d2^{d-3}}{(d+1)2^{d-2}} = 1 + \frac d{2(d+1)}.
    \end{align*}
Therefore, for all $j, \ell \in S$ with $j\not=\ell$,
    \begin{align*}
     \widehat{C}^{\MO^{\infty}}_{\{j,\ell\}}(u,u) & = u^{1+d/(2(d+1))}, \quad 0\le u \le 1.
\end{align*}
    \end{enumerate}
    The generalized MO copulas with these particular choices of parameters as in (a) and  (b) are also known as Caudras-Auge copulas \citep{cuadras:auge:1981} and have been used in L\'evy frailty models for survival analysis. Moreover, if the marginals are identically distributed, then the associated random vector turns out to be exchangeable \citep{durrett:1991}.
\end{example}

%

\begin{example}[Archimedean  copula] \label{ex:archcoppai}
A useful family of copula models for multivariate distributions is the Archimedean copulas \citep{joe:2014,Charpentier:Segers}. A $d$-dimensional copula $C^\phi$ is Archimedean if 
\begin{align}\label{eq:arch}
    C^\phi(u_1, \ldots, u_d) := \phi^{\leftinv}(\phi(u_1)+\ldots+\phi(u_d))
\end{align}
for $0\leq u_1,\ldots, u_d\leq1$, where the generator function $\phi:[0,1] \to [0,\infty]$ is convex, decreasing, with $\phi(1)=0$ and $\phi^{\leftinv}(y)=\inf\{u\in[0,1]: \phi(u)\le y\}$ for $y\in(0,\infty)$. Necessary and sufficient conditions on the function $\phi$ such that $C^\phi$ in \eqref{eq:arch} is a copula are given in \cite{mcneil:neslehova:2009}; note that the survival copula $\widehat C^\phi$ of an Archimedean copula $C^\phi$ is, in general, not Archimedean. A popular choice of $\phi$ is the Laplace transform of any positive random variable.

Tail dependence in such copulas has been studied in \cite{Charpentier:Segers, hua:joe:2011}, and sufficient conditions to obtain pairwise \revision{asymptotic} independence exist.
Suppose the random vector $\bZ$ having an Archimedean copula $C^{\phi}$ has a generator  $\phi$ satisfying
\begin{eqnarray*}
    \lim_{u\downarrow 0}\frac{u\phi'(1-u)}{\phi(1-u)}=1,
\end{eqnarray*}
then we may conclude from  \citet[Theorem 4.1 and equation (4.4)]{Charpentier:Segers}  that 
$\bZ$ is pairwise asymptotically independent. In contrast, if the limit
$$\theta_1:=\lim_{u\downarrow 0}\frac{u\phi'(1-u)}{\phi(1-u)}\in(1,\infty)$$ exists and is larger than $1$ then we have asymptotic upper tail dependence, i.e, $\widehat{C}^\phi$ satisfies \eqref{eq:asydep}.
We observe from \citet[Table 1]{Charpentier:Segers} that for many Archimedean copulas, we have $\theta_1=1$ and thus they are pairwise asymptotically independent; this includes Frank copula, Clayton copula, Ali-Mikhail-Haq copula, and so on; see also \citet[Table 4.1]{nelsen:2006} for further details. \end{example}

\begin{example}[Gaussian copula]\label{ex:gauss:pai}
The Gaussian dependence structure is perhaps the most popular one used in practice. Let $\Phi_\Sigma$ denote the distribution function of a $d$-variate  normal distribution with all marginal means zero, variances one and a positive-definite correlation matrix $\Sigma\in\R^{d\times d}$, and $\Phi$ denote a standard normal distribution function. Then for $0< u_1,\ldots, u_d < 1$,
$$C^{\Sigma}(u_1,\ldots,u_d)= \Phi_\Sigma(\Phi^{-1}(u_1),\ldots, \Phi^{-1}(u_d))$$
 denotes the Gaussian copula with correlation matrix $\Sigma$. 
Pairwise asymptotic independence has been well-known for the bivariate normal distribution, as well as the bivariate Gaussian copula  \revision{ if the correlation is less than one \citep{sibuya:1960,ledford:tawn:1996}.} 
\revision{ Hence, we may immediately conclude that for $d\geq 2$, a Gaussian copula $C^{\Sigma}$ exhibits pairwise asymptotic independence if $\Sigma$ is positive definite.}
In fact, it is possible to find the exact tail order for the Gaussian survival copula for any $S\subseteq \mathbb{I_d}$ with $|S|\ge 2$, the precise result is given in \Cref{subsec:gaussmai}. 

\end{example}

\section{Mutual asymptotic independence}\label{sec:mai}
 
Pairwise asymptotic independence has often either been used as a natural extension of asymptotic independence \citep{balkema:nolde:2010a, guillou:padoan:rizzelli:2018}, or taken as a consequence from other relevant properties \citet[Remark 6.2.5]{dehaan:ferreira:2006}, or implicitly assumed \citep{lalancette:engelke:volgushev:2021} in a variety of works. 
Next, we define a notion that captures the global joint concurrent tail behavior of random vectors portrayed by many popular multivariate dependence structures, e.g., dependence defined using Gaussian, Marshall-Olkin, or various Archimedean copulas, etc., but not restricted to mere replication of pairwise comparisons of tails. 
 \begin{definition}[Mutual asymptotic independence]\label{def:mai}
     A random vector $\bZ\in \R^d$  with \revision{ continuous marginal distributions and} survival copula $\widehat C$ is \emph{mutually asymptotically independent} if for all $S\subseteq\mathbb{I}_d$ with $|S|\ge 2$, we have
     \begin{align}\label{eq:mutualasyind}
     \lim_{u\downarrow 0} \frac{\widehat{C}_S(u,\ldots,u)}{\widehat{C}_{S\setminus \{\ell\}}(u,\ldots,u)} 
     = 0, \quad \forall\,\, \ell \in S,
     \end{align}
 where we define $0/0:=0$. If $\bZ\sim F$ has copula $C$, we interchangeably say $\bZ, F, C$ or $\widehat{C}$ possesses mutual asymptotic independence.
 \end{definition}

\revision{
\begin{remark}\label{rem:multasinfresnick}
 Some explanation is due here in order to distinguish between the traditional notion of \emph{multivariate asymptotic independence} (also called ``mutual" asymptotic independence sometimes) in dimensions $d>2$ (\citet[Chapter 5.5]{resnickbook:2008}, \citet[Chapter 5.2]{galambos:1978}, \citet[Chapter 7.6]{mcneil:frey:embrechts:2015}) and the notion defined in \Cref{def:mai}. Due to \citet[Proposition 5.27]{resnickbook:2008} under the constrain that the margins are continuous, identically distributed and in the maximum domain of attraction of a univariate extreme value distribution, pairwise asymptotic independence is equivalent to a distribution having ``multivariate asymptotic independence" meaning that the distribution lies in the maximum domain of attraction of an extreme value distribution with independent marginals (the limit distribution is a product measure). But our notions of ``mutual asymptotic independence'' and ``multivariate asymptotic independence'' are not equivalent which we see in the following and in particular,  in \Cref{ex:painomai}.
 \end{remark}
 }

When $d=2$, both \eqref{eq:pairasyindcop} and \eqref{eq:mutualasyind} boil down to \eqref{eq:bvasyindcop} and hence, are equivalent.    Assuming $d\ge 3$ and mutual asymptotic independence, if we take all choices of $S\subseteq\mathbb{I}_d$ with $|S|=2$, then \eqref{eq:mutualasyind} is just a restatement of  \eqref{eq:pairasyindcop}, implying pairwise asymptotic independence. \revision{We summarize this in the next proposition.}

\begin{proposition}\label{prop:maiimppai}
   If a random vector $\bZ\in \R^d,$ $ d\ge 2$ 
   \revision{with continuous marginal distributions}
   is mutually asymptotically independent, then it is also pairwise asymptotically independent.
\end{proposition}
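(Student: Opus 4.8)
The plan is to recognize that the pairwise condition \eqref{eq:pairasyindcop} is precisely the defining relation \eqref{eq:mutualasyind} of mutual asymptotic independence evaluated on two-element index sets; hence the implication should be essentially immediate once the relevant singleton survival copula is identified.

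Concretely, I would fix an arbitrary pair $j,\ell\in\mathbb{I}_d$ with $j\neq\ell$ and instantiate the definition of mutual asymptotic independence at the set $S=\{j,\ell\}$. This is permissible because $|S|=2\ge 2$, so \eqref{eq:mutualasyind} applies. Choosing to delete $\ell$ from $S$ leaves the singleton $S\setminus\{\ell\}=\{j\}$, so \eqref{eq:mutualasyind} reads
\[
\lim_{u\downarrow 0}\frac{\widehat{C}_{\{j,\ell\}}(u,u)}{\widehat{C}_{\{j\}}(u)}=0.
\]

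The one ingredient to supply is the value of the one-dimensional marginal survival copula. Since every one-dimensional margin of a copula is the uniform distribution on $[0,1]$, we have $\widehat{C}_{\{j\}}(u)=u$, as already recorded in the notation section. Substituting this into the displayed limit gives $\lim_{u\downarrow 0}\widehat{C}_{\{j,\ell\}}(u,u)/u=0$, i.e.\ $\widehat{C}_{\{j,\ell\}}(u,u)=o(u)$ as $u\downarrow 0$, which is exactly \eqref{eq:pairasyindcop}. As $j,\ell$ were arbitrary, pairwise asymptotic independence follows.

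There is no genuine obstacle in this argument: the content lies entirely in having set up the two definitions so that pairwise independence is the $|S|=2$ special case of the mutual condition. The only point requiring any care is to confirm that the denominator in \eqref{eq:mutualasyind} really collapses to $u$ for a singleton index set — equivalently, that the convention $\widehat{C}_{\{j\}}(u)=u$ is in force — and to note that, consistently with the text's observation that the two notions coincide when $d=2$, the case $d=2$ is handled by the same one-line specialization.
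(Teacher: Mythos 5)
Your proof is correct and follows essentially the same route as the paper's one-line argument: specializing \eqref{eq:mutualasyind} to two-element sets $S=\{j,\ell\}$ and using the convention $\widehat{C}_{\{j\}}(u)=u$ from the notation section to see that the mutual condition collapses to \eqref{eq:pairasyindcop}. Your version merely makes explicit the singleton-denominator step that the paper leaves implicit, which is a reasonable amount of added detail.
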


The reverse implication of  \Cref{prop:maiimppai}  is not necessarily true as we see in the following example, which mimics the consequences for the analogous notions of classical ``mutual'' and ``pairwise independence'' (\cite{hogg:mckean:craig:2013}).

\begin{example}\label{ex:painomai}
The difference between pairwise and mutual independence can be shown using an $\R^3$-valued random vector with Bernoulli marginals  (cf. \citet[Chapter 2]{hogg:mckean:craig:2013}).
We take a similar spirit using uniform marginals. Consider i.i.d. uniform [0,1] random variables  $U, V, W$. Then $\revision{\bZ^*}=(U,V,W)$  is mutually asymptotically independent (cf. \Cref{ex:ind:mai}) and hence, pairwise asymptotically independent as well. Now consider $\bZ=(Z_1, Z_2, Z_3) \sim F$ such that
\begin{align*}
    \bZ & = \begin{cases}
         (U,V, \min(U,V)), & \text{ with prob. }\;\; 1/3,\\
          (U,\min(U,V),V), & \text{ with prob. } \;\; 1/3,\\
           (\min(U,V),U,V), & \text{ with prob. } \;\; 1/3.
    \end{cases}
\end{align*}
First note that for $0<z<1$, \revision{marginally}, 
\[F_j(z)= \P(Z_j\le z)=2z/3+1/3[1-(1-z)^2]= {4z/3 - z^2/3}, \quad j=1,2,3,\]
and hence, the $Z_j$'s are identically distributed.
 \revision{
\begin{itemize}
\item[(i)] If $\widehat{C}$ denotes the survival copula of $\bZ$, then we can check that for any $\{j,\ell\}\subset \{1,2,3\}$,
  \begin{align}
   \widehat{C}_{\{j,\ell\}}(u,u) &= \P(Z_j>2-\sqrt{1+3u}, Z_{\ell}>2-\sqrt{1+3u}) \nonumber\\
  & = \P(U>2-\sqrt{1+3u}, V>2-\sqrt{1+3u}) \nonumber\\ & =(\sqrt{1+3u}-1)^2=  9u^2/4 +o(u^2), \quad u\downarrow 0. \label{eq:expainomai:Cjl}
\end{align}
Hence, $\bZ$ exhibits pairwise asymptotic independence. 
\item[(ii)] But 
           {    \begin{align*}
  \widehat{C}(u,u,u) &= \P(U>2-\sqrt{1+3u}, V>2-\sqrt{1+3u})\\ & =(\sqrt{1+3u}-1)^2=  9u^2/4+o(u^2), \quad u\downarrow 0,
\end{align*}}
implying that $\bZ$ does not have mutual asymptotic independence.

\item[(iii)] We can compute that if $\bZ^{(1)}, \ldots, \bZ^{(n)}$ are i.i.d. $F$ and $\bM_n$ is the random vector of component-wise maxima given by
 $$\bM_n= 
 \left(\bigvee_{i=1}^n Z_1^{(i)},\bigvee_{i=1}^n Z_2^{(i)},\bigvee_{i=1}^n Z_3^{(i)}\right),$$
 then with $a_n=3/{2n}$ and $b_n=1$, we have for $\bx\in \R$,
 \begin{align}\label{eq:prodmeasGEV}
    \lim_{n\to\infty} \P(\bM_n \le a_n\bx+b_n) =   \lim_{n\to\infty} F^n(a_n\bx+b_n) = G(\bx) = \prod_{i=1}^3 \Psi_1(x_i).
 \end{align}
 where $\Psi_1(\cdot)$ is a univariate extreme value distribution given by $\Psi_1(x)=\min(e^x,1), x\in\R$. Thus $F\in \MDA(G)$ where $G$ is indeed a product measure according to \eqref{eq:prodmeasGEV}, implying multivariate asymptotic independence although $F$ does not have mutual asymptotic independence as shown in (ii).
 
 For illustration, we showed the multivariate asymptotic independence by hand, but the pairwise asymptotic independence in (i) and \citet[Proposition 5.27]{resnickbook:2008} already imply multivariate asymptotic independence. 
 \end{itemize}
}
\end{example}

\subsection{Examples: Part I}

It is instructive to note examples of mutual asymptotic independence in various distributions. 

 \begin{example}[Independence]\label{ex:ind:mai}
    Suppose $C^{\text{ind}}$ is the independence copula as given in \eqref{independence copula}, then the survival copula for any non-empty subset $S\subset\mathbb{I}_d$ satisfies
    \[\widehat{C}_S^{\text{ind}}(u,\ldots,u)= u^{|S|}, \quad\quad  0\le u \le 1.\]
   Thus, \eqref{eq:mutualasyind} holds for all such $S$ with $|S|\ge 2$ and hence, $C^{\text{ind}}$ exhibits mutual asymptotic independence. 
\end{example}

\begin{example}[Marshall-Olkin dependence]\label{ex:mo:mai}
In  \Cref{ex:mo:pai} we stated that any random vector $\bZ$ with dependence given by the generalized Marshall-Olkin survival copula $\widehat{C}^{\MO(\Lambda)}$ as defined in \eqref{eq:scop:mo} is pairwise asymptotically independent. 
\revision{In fact, \citet[Proposition 3]{lin:li:2014} allows us to conclude that $\widehat{C}^{\MO(\Lambda)}$ is indeed mutually asymptotically independent as well.}

\end{example}

\subsection{Examples: Part II}
In this section, we discuss examples that are pairwise asymptotically independent but sometimes are not mutually asymptotically independent. This will include a large class of examples from the Archimedean copula and the Gaussian copula family.

\subsubsection{Archimedean copula}
Recall the Archimedean copula $C^\phi$ defined in \Cref{ex:archcoppai}. The following result provides sufficient conditions on the generator $\phi$ for the random vector with Archimedean copula $C^\phi$ to possess both pairwise and mutual asymptotic independence.

\begin{theorem}[Archimedean  copula with mutual asymptotic independence] \label{Arch:mutuall}
Let the dependence structure of a random vector $\bZ\in\R^d$ \revision{with continuous marginal distributions } be given by an Archimedean copula $C^\phi$ with generator $\phi$ as in \eqref{eq:arch}. Suppose $\phi^{\leftarrow}$ is $d$-times continuously differentiable and $(-D)^{j}\phi^{\leftarrow}(0)<\infty$  $\forall\,j\in\mathbb{I}_d$.  Then $\bZ$
possesses both pairwise and mutual asymptotic independence.
\end{theorem}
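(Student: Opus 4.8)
The plan is to establish the mutual asymptotic independence \eqref{eq:mutualasyind} directly; pairwise asymptotic independence then comes for free from \Cref{prop:maiimppai}. Throughout write $\psi:=\phi^{\leftarrow}$, so that $\psi(0)=1$ (because $\phi(1)=0$) and $\psi$ is decreasing. The one structural fact I would exploit is that every $k$-dimensional margin of an Archimedean copula is again Archimedean with the \emph{same} generator $\phi$; in particular the $k$-dimensional marginal copula evaluated at $(1-u,\ldots,1-u)$ equals $\psi\bigl(k\,\phi(1-u)\bigr)$. Substituting this into the inclusion--exclusion representation of the survival copula and collecting the $\binom{k}{j}$ subsets of each size $j$, I obtain, for every $S$ with $|S|=k$,
\begin{align*}
  \widehat C_S(u,\ldots,u)=\sum_{j=0}^{k}\binom{k}{j}(-1)^{j}\,\psi\bigl(j\,\phi(1-u)\bigr).
\end{align*}

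Next I set $t:=\phi(1-u)$, which decreases to $\phi(1)=0$ as $u\downarrow0$, and recognise the right-hand side as the scaled $k$-th forward difference $(-1)^{k}\Delta_t^{k}\psi(0)$. Since $\psi\in C^{d}\subseteq C^{k}$, a $k$-th order Taylor expansion of $\psi$ at $0$ together with the identity $\sum_{j=0}^{k}(-1)^{j}\binom{k}{j}j^{i}=(-1)^{k}k!\,S(i,k)$, where $S(i,k)$ is a Stirling number of the second kind with $S(i,k)=0$ for $i<k$ and $S(k,k)=1$, annihilates every contribution of order lower than $t^{k}$ and leaves
\begin{align*}
  \widehat C_S(u,\ldots,u)=(-1)^{k}\psi^{(k)}(0)\,t^{k}+o(t^{k})=(-D)^{k}\psi(0)\,\bigl[\phi(1-u)\bigr]^{k}+o\bigl(\bigl[\phi(1-u)\bigr]^{k}\bigr),
\end{align*}
the remainder being uniform because $j$ runs over the finite set $\{0,\ldots,k\}$. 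Equivalently one may invoke the mean-value theorem for finite differences, $\Delta_t^{k}\psi(0)=t^{k}\psi^{(k)}(\xi)$ with $\xi\in(0,kt)$, and let $\xi\to0$ using continuity of $\psi^{(k)}$.

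It remains to form the quotient in \eqref{eq:mutualasyind} for a fixed $\ell\in S$ with $|S|=k\ge2$. If $k=2$ the denominator is the one-dimensional margin $\widehat C_{S\setminus\{\ell\}}(u)=u$, and since $\psi(t)=1+\psi'(0)t+o(t)$ gives $\phi(1-u)\sim u/(-\psi'(0))$, the asymptotic above yields $\widehat C_S(u,u)/u\sim(-D)^{2}\psi(0)\,u/\psi'(0)^{2}\to0$. If $k\ge3$ the denominator is itself of the displayed form with exponent $k-1$, so
\begin{align*}
  \frac{\widehat C_S(u,\ldots,u)}{\widehat C_{S\setminus\{\ell\}}(u,\ldots,u)}\sim\frac{(-D)^{k}\psi(0)}{(-D)^{k-1}\psi(0)}\,\phi(1-u)\longrightarrow0,\qquad u\downarrow0,
\end{align*}
because $\phi(1-u)\to\phi(1)=0$. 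This is exactly \eqref{eq:mutualasyind} for every admissible pair $(S,\ell)$, hence mutual asymptotic independence.

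The delicate point, and the only place where more than routine bookkeeping is required, is to guarantee that the leading coefficient of the \emph{denominator} does not vanish, i.e.\ $(-D)^{j}\psi(0)=(-1)^{j}\psi^{(j)}(0)>0$ for $1\le j\le d-1$; only then is the last display genuinely of order $\phi(1-u)$. Nonnegativity $(-1)^{j}\psi^{(j)}\ge0$ is free, since a valid $d$-dimensional generator inverse is $d$-monotone \cite{mcneil:neslehova:2009}; strict convexity of $\psi$ forces $(-D)^{1}\psi(0)=-\psi'(0)>0$; and the monotonicity hierarchy links successive coefficients, for if $(-1)^{j-1}\psi^{(j-1)}(0)=0$ then $0$ is a minimum of the nonnegative function $(-1)^{j-1}\psi^{(j-1)}$, whose one-sided derivative then forces $(-1)^{j}\psi^{(j)}(0)=0$ as well. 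Thus the zeros can only form a final block and the positive coefficients an initial one, and the argument goes through whenever this block reaches index $d-1$. In the residual (flat) case where some $(-D)^{j}\psi(0)$ degenerates to $0$ at an index below $d-1$, one must descend to the first nonvanishing Taylor coefficient on both sides of the quotient to recover the exact orders; this is the step I expect to be the main technical obstacle.
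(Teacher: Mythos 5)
Your route is genuinely different in form from the paper's: the paper proves \Cref{Arch:mutuall} in one line, by citing \cite[Theorem 4.3]{Charpentier:Segers}, whereas you reconstruct that result from scratch. Your reconstruction is the right one: since every $k$-margin of $C^\phi$ is Archimedean with the same generator, the diagonal survival copula is exactly the $k$-th finite difference $\widehat C_S(u,\ldots,u)=\sum_{j=0}^{k}\binom{k}{j}(-1)^{j}\psi(jt)$ with $t=\phi(1-u)$, and the Stirling-number cancellation plus the uniform Taylor remainder correctly yield $\widehat C_S(u,\ldots,u)=(-D)^{k}\psi(0)\,t^{k}+o(t^{k})$. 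Note also that $\phi(1-u)\sim u/(-\psi'(0))$ is legitimate: the hypothesis $(-D)\psi(0)<\infty$ already rules out $\phi'(1)=0$ (which would give $\psi'(0)=-\infty$), so $-\psi'(0)\in(0,\infty)$ automatically. What the citation buys the paper is brevity and the extra payoff recorded right after the theorem (the tail orders $\kappa_S=|S|$ for all margins); what your argument buys is a self-contained, elementary proof whose only input is $d$-monotonicity from \cite{mcneil:neslehova:2009}.

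The one genuine gap is the degenerate case you flag at the end, and you leave it open; but it is in fact vacuous under the stated hypotheses, and your own monotonicity idea closes it. Suppose $(-1)^{j}\psi^{(j)}(0)=0$ for some $1\le j\le d-1$. By $d$-monotonicity (together with the $C^{d}$ hypothesis for the top orders), $g:=(-1)^{j}\psi^{(j)}\ge 0$, and $g$ is nonincreasing because $g'=-(-1)^{j+1}\psi^{(j+1)}\le 0$, which is available precisely since $j+1\le d$. A nonnegative, nonincreasing function vanishing at the origin is identically zero, so $\psi$ is a polynomial of degree at most $j-1$ on its domain; a nonconstant polynomial cannot stay in $[0,1]$ while decreasing on $[0,\infty)$, and the truncated-linear alternative $\psi(s)=(1-cs)_{+}$ for a non-strict generator fails both the smoothness hypothesis and $d$-monotonicity for $d\ge 3$. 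Hence $(-D)^{j}\psi(0)>0$ for all $j\le d-1$: the denominator coefficient in your quotient never vanishes. The only coefficient that may vanish is $(-D)^{d}\psi(0)$, and there the conclusion survives anyway, since the numerator is then $o(t^{d})$ against a denominator of exact order $t^{d-1}$, so the ratio still tends to $0$ (only the asserted ``$\sim$'' and the exact tail order, not mutual asymptotic independence, would degrade). With this observation inserted, your proof is complete; also note the ``final block of zeros'' can therefore only begin at index $d$, not below $d-1$ as your sign-propagation step alone would allow.
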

The proof follows directly from \citet[Theorem 4.3]{Charpentier:Segers}.
The Archimedean copulas of \Cref{Arch:mutuall}  have the property that for any subset $S\subset \mathbb{I}_d$ with $|S|\ge 2$, the survival copula $\widehat C_S^{\phi}$ of the $|S|$-dimensional marginal behaves like the independence copula near the tails, i.e.,
\[\widehat C_S^{\phi}(u,\ldots,u) \sim u^{\kappa_S}, \quad u\downarrow 0, \]
where the upper tail order of $C_S$ is $\kappa_S=|S|$ (also follows from \cite[Theorem 4.3]{Charpentier:Segers}).
In particular, the upper tail order for $C^\phi$ is $\kappa=\kappa_{\mathbb{I}_d}=d$ and hence, these copulas are also ``nearly independent'' (see paragraph below \eqref{eq:uppertailorder}); several popular Archimedean copulas models, e.g. as Frank copula, Clayton copula and  Ali-Mikhail-Haq copula (\citet[Table 1]{Charpentier:Segers}) fall in this class exhibiting both pairwise and mutual asymptotic independence. In contrast, there are also Archimedean copulas exhibiting only pairwise asymptotic independence but not mutual asymptotic independence. The following result provides sufficient conditions on the generator $\phi$ to obtain such Archimedean copulas.

\begin{theorem}[Archimedean  copula with only pairwise asymptotic independence] \label{Arch:not:mutuall}
Let the dependence structure of a random vector $\bZ\in\R^d$ \revision{with continuous marginal distributions }  be given by an Archimedean copula $C^\phi$ with generator $\phi$ as in \eqref{eq:arch}. Suppose $\phi'(1)=0$ and
\begin{eqnarray*}
    L(u):=-\phi'(1-u)-u^{-1}\phi(1-u)
\end{eqnarray*}
is a positive function, which is slowly varying at $0$. 
Then $\bZ$ possesses pairwise asymptotic independence but does not possess mutual asymptotic independence.
\end{theorem}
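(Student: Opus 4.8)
The plan is to reduce everything to the behaviour of the univariate generator near its endpoint, separating a first–order term (which forces pairwise asymptotic independence) from a genuinely slower second–order term (which obstructs mutual asymptotic independence). Since all one–dimensional margins of an Archimedean copula coincide, inclusion–exclusion gives, for every $S\subseteq\mathbb{I}_d$ with $|S|=k$, the exact identity
\[
\widehat{C}_S(u,\ldots,u)=\sum_{j=0}^{k}(-1)^j\binom{k}{j}\,\phi^{\leftarrow}\!\bigl(j\,\phi(1-u)\bigr),\qquad 0<u<1,
\]
because the $j$-dimensional diagonal of $C^\phi$ equals $\phi^{\leftarrow}(j\phi(1-u))$ and $\phi^{\leftarrow}(0)=1$. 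Writing $G(x):=1-\phi^{\leftarrow}(x)$, so that $G(0)=0$ and $G$ is increasing, and using $\sum_{j=0}^k(-1)^j\binom{k}{j}=0$, this becomes $\widehat{C}_S(u,\ldots,u)=\sum_{j=1}^{k}(-1)^{j+1}\binom{k}{j}G\bigl(j\,\phi(1-u)\bigr)$. Everything is thus driven by the function $x\mapsto\phi^{\leftarrow}(x)$ as $x=\phi(1-u)\downarrow0$.

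For the first order I would set $\psi(u):=\phi(1-u)$, so the hypothesis on $L$ becomes the linear ODE $\psi'(u)=u^{-1}\psi(u)+L(u)$ with $\psi(0)=0$ and $\psi'(0)=-\phi'(1)=0$. Solving it with the integrating factor $u^{-1}$ and using $\psi'(0)=0$ to annihilate the boundary term (this is exactly where $\phi'(1)=0$ enters) yields
\[
\phi(1-u)=u\,\tilde L(u),\qquad \tilde L(u):=\int_0^u t^{-1}L(t)\,dt,
\]
the integral converging since it equals $u^{-1}\phi(1-u)<\infty$. As $\tilde L$ is the Karamata/de Haan integral of a slowly varying $L$, it is itself slowly varying, $\tilde L(u)\to0$, and $L(u)/\tilde L(u)\to0$. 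Hence $\tfrac{-u\phi'(1-u)}{\phi(1-u)}=1+\tfrac{L(u)}{\tilde L(u)}\to1$, which is precisely the criterion of \Cref{ex:archcoppai} (i.e.\ \cite[Theorem 4.1, eq.~(4.4)]{Charpentier:Segers}); this gives pairwise asymptotic independence, $\widehat C_{j,\ell}(u,u)=o(u)$.

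For the failure of mutual asymptotic independence I would show that adding a third coordinate no longer lowers the tail order. Since $\phi(1-u)=u\tilde L(u)$ is regularly varying of index $1$ at $0$, its asymptotic inverse $G=1-\phi^{\leftarrow}$ is also regularly varying of index $1$, so $G(x)=x\,m(x)$ with $m$ slowly varying, $m\to\infty$, and $m(x)=1/\tilde L(G(x))$. The linear part of $G$ cancels in the finite differences because $\sum_{j=1}^k(-1)^{j+1}\binom{k}{j}j=0$ for $k\ge2$, leaving
\[
\widehat{C}_{\{1,\ldots,k\}}(u,\ldots,u)=\phi(1-u)\sum_{j=1}^{k}(-1)^{j+1}\binom{k}{j}j\bigl(m(j\phi(1-u))-m(\phi(1-u))\bigr).
\]
Using $G(jx)\sim jG(x)=:jw$ and Karamata's theorem, $m(jx)-m(x)\sim -L(w)\,\tilde L(w)^{-2}\log j$, so every diagonal with $k\ge2$ satisfies $\widehat{C}_{\{1,\ldots,k\}}(u,\ldots,u)\sim -b_k\,\phi(1-u)\,L(w)\tilde L(w)^{-2}$, where $b_k:=\sum_{j=1}^k(-1)^{j+1}\binom{k}{j}j\log j$ is a universal constant with $b_2=-2\log2$ and $b_3=-3\log(4/3)$. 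Both are nonzero, hence
\[
\lim_{u\downarrow0}\frac{\widehat{C}_{\{1,2,3\}}(u,u,u)}{\widehat{C}_{\{1,2\}}(u,u)}=\frac{b_3}{b_2}=\frac{3\log(4/3)}{2\log2}\neq0,
\]
and taking $S=\{1,2,3\}$, $\ell=3$ in \eqref{eq:mutualasyind} (the statement is non-vacuous only for $d\ge3$, since for $d=2$ pairwise and mutual asymptotic independence coincide) shows mutual asymptotic independence fails.

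The hard part is the asymptotic $m(jx)-m(x)\sim -L(w)\tilde L(w)^{-2}\log j$. Making it rigorous requires (i) the asymptotic inversion of the regularly varying map $u\mapsto u\tilde L(u)$ to control $G(jx)$ against $jG(x)$, and (ii) Karamata's theorem with the uniform convergence theorem for slowly varying functions to evaluate $\tilde L(w)-\tilde L(jw)=-\int_w^{jw}t^{-1}L(t)\,dt\sim -L(w)\log j$, all uniformly over the finitely many indices $j\in\{1,\ldots,k\}$. Once these two estimates are secured, the binomial cancellation identities are elementary and the conclusion follows; alternatively one may substitute the corresponding tail-order computations for Archimedean generators in \cite{Charpentier:Segers} for the hands-on estimates.
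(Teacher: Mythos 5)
Your proof is correct, but it takes a genuinely different route from the paper: the paper's proof of \Cref{Arch:not:mutuall} is a one-line citation of \cite[Theorem 4.6 and Corollary 4.7]{Charpentier:Segers}, whereas you reconstruct those results from scratch. Your chain of steps --- the exchangeability/inclusion--exclusion identity $\widehat C_S(u,\ldots,u)=\sum_{j=1}^{k}(-1)^{j+1}\binom{k}{j}G\bigl(j\phi(1-u)\bigr)$ with $G=1-\phi^{\leftarrow}$, the integrating-factor solution $\phi(1-u)=u\tilde L(u)$ with $\tilde L(u)=\int_0^u t^{-1}L(t)\,dt$ (where $\phi'(1)=0$ is exactly what kills the boundary term), the cancellation $\sum_{j=1}^k(-1)^{j+1}\binom{k}{j}j=0$ for $k\ge2$, and the de Haan $\Pi$-variation estimate $\tilde L(jw)-\tilde L(w)\sim L(w)\log j$ --- is sound, and your constants check out: $b_2=-2\log 2$, $b_3=-3\log(4/3)$, so the ratio in \eqref{eq:mutualasyind} for $S=\{1,2,3\}$, $\ell=3$ tends to $3\log(4/3)/(2\log 2)\neq 0$, which indeed refutes mutual asymptotic independence. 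One simplification you missed: since $G$ is the inverse of $\psi(u)=\phi(1-u)$, your $w=G(\phi(1-u))$ is exactly $u$, so your expansion reads $\widehat C_S(u,\ldots,u)\sim |b_{|S|}|\,u\,L(u)/\tilde L(u)$, recovering precisely the statement quoted after the theorem ($\kappa_S=1$ with one common slowly varying function $\ell=L/\tilde L$ for every $|S|\ge 2$), and it also yields pairwise asymptotic independence directly since $L(u)/\tilde L(u)\to 0$, without needing the $\theta_1=1$ criterion from \Cref{ex:archcoppai}. As for what each approach buys: the paper's citation is short and rigorous at the price of opacity, while your derivation makes the mechanism visible (the linear part of $\phi^{\leftarrow}$ cancels on every diagonal of order $\ge 2$, so all these diagonals share tail order $1$ and differ only by the computable constants $b_k$) and gives the exact limiting ratios; the cost is the two technical lemmas you correctly flag --- asymptotic inversion of the index-$1$ regularly varying map $u\mapsto u\tilde L(u)$, and the locally uniform convergence for $\Pi$-varying functions --- which are standard Karamata--de Haan facts and are, in substance, what \cite[Theorem 4.6]{Charpentier:Segers} supplies. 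Two small points in your favor: you silently and correctly repaired the typo in the theorem statement (the $s^{-1}$ in the definition of $L$ should be $u^{-1}$), and your observation that the conclusion is non-vacuous only for $d\ge 3$ is right, since for $d=2$ pairwise and mutual asymptotic independence coincide by definition.
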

The proof follows directly from \citet[Theorem 4.6 and Corollary 4.7]{Charpentier:Segers}.
Now, the Archimedean copulas of \Cref{Arch:not:mutuall} have a different characteristic in the sense that for any subset $S\subset \mathbb{I}_d$ with $|S|\ge 2$, the survival copula $\widehat C_S^{\phi}$ on the $|S|$-dimensional marginal behaves as 
\[\widehat C_S^{\phi}(u,\ldots,u) \sim u\ell(u), \quad u\downarrow 0, \]
where $\ell$ is a slowly varying function at 0 (follows from \citet[Corollary 4.7]{Charpentier:Segers}). Hence, the upper tail order of $C_S$ is $\kappa_S=1$ for all $S\subset \mathbb{I}_d$ with $|S|\ge 2$.
To obtain an example of such a copula,  take some  parameter  $\theta\in (0,\infty)$ and define the generator 
    $$\phi_\theta(u)=\frac{1-u}{(-\log(1-u))^\theta}, \quad 0\leq u\leq 1, $$
 of an Archimedean copula $C^{\phi_\theta}$. Then  $C^{\phi_\theta}$ satisfies the assumptions of \Cref{Arch:not:mutuall}, resulting in an Archimedean copula with pairwise but not mutual asymptotic independence; we refer to \citet[Table 1]{Charpentier:Segers}.

\subsubsection{Gaussian copula.}\label{subsec:gaussmai}
 In \Cref{ex:gauss:pai} we observed that any random vector with Gaussian copula having a positive definite correlation matrix has pairwise asymptotic independence. Interestingly, not all such models will have mutual asymptotic independence. The following theorem provides the exact condition for this. 

\begin{theorem}\label{thm:maiforGausscop}
  Let the dependence structure of a random vector $\bZ\in \R^d$ \revision{with continuous marginal distributions }  be given by a Gaussian copula $C^{\Sigma}$ with positive-definite correlation matrix $\Sigma$. 
   Then   $\bZ$ exhibits mutual asymptotic independence if and only if
      $\Sigma^{-1}_{S}\bone_{|S|} >\bzero_{|S|}$ for all non-empty sets $S\subseteq \mathbb{I}_d$. 
\end{theorem}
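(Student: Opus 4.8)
The plan is to translate the statement into a question about the tails of multivariate Gaussian orthant probabilities and the associated quadratic program. Writing $\bX\sim N(\bzero,\Sigma)$ and $t_u=\Phi^{-1}(1-u)\to\infty$ as $u\downarrow0$, for every $S$ with $|S|\ge2$ we have $\widehat C_S(u,\ldots,u)=\P(X_j>t_u,\ \forall j\in S)$, and $u=\ov{\Phi}(t_u)\sim(t_u\sqrt{2\pi})^{-1}\e^{-t_u^2/2}$, so that $t_u^2\sim2\log(1/u)$. The first step is to record the exact tail asymptotics of such Gaussian orthant probabilities (proved or cited in this subsection): there is a slowly varying $\ell_S$ (a power of $\log(1/u)$) and a constant $c_S>0$ with $\widehat C_S(u,\ldots,u)\sim c_S\,u^{\kappa_S}\ell_S(u)$, where the \emph{tail order} is the value of the quadratic program $\kappa_S=\min_{\by\ge\bone_{|S|}}\by^\top\Sigma_S^{-1}\by$ and the log-exponent in $\ell_S$ depends only on the active set $I(S)=\{j\in S:y^*_j=1\}$ of the (unique, since $\Sigma_S$ is positive definite) minimizer $\by^*$.

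With these asymptotics, mutual asymptotic independence becomes a statement about the increments of the tail order: since $\widehat C_S\le\widehat C_{S\setminus\{\ell\}}$ one always has $\kappa_S\ge\kappa_{S\setminus\{\ell\}}$, the ratio in \eqref{eq:mutualasyind} tends to $0$ whenever $\kappa_S>\kappa_{S\setminus\{\ell\}}$, while if $\kappa_S=\kappa_{S\setminus\{\ell\}}$ it tends to a positive constant precisely when the log-exponents agree. I would then establish two facts about the program. First, the KKT conditions for the strictly convex program show that the vertex $\bone$ is optimal iff $\Sigma_S^{-1}\bone\ge\bzero$, in which case $\kappa_S=\bone^\top\Sigma_S^{-1}\bone$; in general, if $I=I(S)$ is the active set then $(\Sigma_S^{-1}\by^*)_I=\Sigma_I^{-1}\bone_I>\bzero$ and $(\Sigma_S^{-1}\by^*)_{S\setminus I}=\bzero$, whence the block-inverse identity gives $\kappa_S=\bone_I^\top\Sigma_I^{-1}\bone_I=\kappa_I$. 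Second, a one-step block (Schur complement) computation gives, when the relevant vertices are optimal,
\[
\kappa_S-\kappa_{S\setminus\{\ell\}}=\frac{(1-\beta_\ell)^2}{c_\ell},\qquad (\Sigma_S^{-1}\bone)_\ell=\frac{1-\beta_\ell}{c_\ell},
\]
where $\beta_\ell=\bone^\top\Sigma_{S\setminus\{\ell\}}^{-1}\bc_\ell$, with $\bc_\ell$ the vector of correlations of the $\ell$-th coordinate with those in $S\setminus\{\ell\}$, and $c_\ell>0$ the Schur complement; in particular the sign of the increment is governed by $(\Sigma_S^{-1}\bone)_\ell$.

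For the forward implication, suppose $\Sigma_S^{-1}\bone_{|S|}>\bzero_{|S|}$ for every non-empty $S$. Then every subset has its vertex as minimizer, so $\kappa_S=\bone^\top\Sigma_S^{-1}\bone$ throughout, and for each $\ell\in S$ the displayed identity yields $\kappa_S-\kappa_{S\setminus\{\ell\}}=(1-\beta_\ell)^2/c_\ell>0$, because $(\Sigma_S^{-1}\bone)_\ell>0$ forces $\beta_\ell<1$. Strict monotonicity of the tail order along every one-element removal gives \eqref{eq:mutualasyind}, i.e. mutual asymptotic independence.

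For the converse I would argue by contraposition via a minimal counterexample: let $S$ be a set of least cardinality with $\Sigma_S^{-1}\bone\not>\bzero$ (necessarily $|S|\ge2$), so every proper subset has strictly positive $\Sigma^{-1}\bone$ and hence vertex tail order. Pick $j\in S$ with $(\Sigma_S^{-1}\bone)_j\le0$. If the inequality is strict, $j$ is inactive, $I(S)\subseteq S\setminus\{j\}$, and the sandwich $\kappa_{I(S)}\le\kappa_{S\setminus\{j\}}\le\kappa_S=\kappa_{I(S)}$ forces $\kappa_S=\kappa_{S\setminus\{j\}}$; if it is zero, then $\beta_j=1$ and the block identity again gives $\kappa_S=\kappa_{S\setminus\{j\}}$. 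In either case the tail orders coincide, so mutual asymptotic independence could only be rescued by the logarithmic corrections, and here is the crux: I must show that adjoining $j$ does not strictly lower the log-exponent, so that $\widehat C_S/\widehat C_{S\setminus\{j\}}$ tends to a positive constant rather than to $0$. This is exactly where the exact asymptotics of the first step enter, since the log-exponent is controlled by the active set, which is unchanged in the strict case ($I(S)=I(S\setminus\{j\})$). I expect the \emph{main obstacle} to be the degenerate borderline case $(\Sigma_S^{-1}\bone)_j=0$ (a zero-multiplier KKT point), which requires the finest form of the Gaussian orthant expansion; once this is handled, the single failing pair $(S,j)$ contradicts \eqref{eq:mutualasyind} and completes the equivalence.
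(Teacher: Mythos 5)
Your proposal follows essentially the same architecture as the paper's proof: both rest on the exact orthant asymptotics $\widehat C_S^{\Sigma}(u,\ldots,u)\sim \Upsilon_S(2\pi)^{\kappa_S/2}u^{\kappa_S}(-2\log u)^{(\kappa_S-|I_S|)/2}$ (the paper's \Cref{prop:gcsurvcop}, imported from \cite{das:fasen:2023a}) together with the Hashorva--H\"usler description of the quadratic program (\Cref{lem:quadprog}), and both reduce \eqref{eq:mutualasyind} to comparing the tail orders $\kappa_S,\kappa_{S\setminus\{\ell\}}$ and the active sets $I_S, I_{S\setminus\{\ell\}}$. Your forward direction is correct, and the rank-one increment identity $\kappa_S-\kappa_{S\setminus\{\ell\}}=(1-\beta_\ell)^2/c_\ell$ with $(\Sigma_S^{-1}\bone)_\ell=(1-\beta_\ell)/c_\ell$ is a clean, more explicit version of the Schur-complement inequality the paper establishes in \Cref{lem:aux1}(a).

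The converse, however, has a genuine gap: you assert that $(\Sigma_S^{-1}\bone)_j<0$ forces $j$ to be inactive, i.e.\ $I_S\subseteq S\setminus\{j\}$. The KKT/Hashorva--H\"usler structure only yields the \emph{opposite} implication: for $\ell\in S\setminus I_S$ one has $(\Sigma_S^{-1}\be^S)_\ell=0$ (take $\bz=e_\ell$ in the identity $\bz^\top\Sigma_S^{-1}\be^S=\bz_{I_S}^\top\Sigma_{I_S}^{-1}\bone_{I_S}$ of \Cref{lem:quadprog}), from which nonpositivity of $(\Sigma_S^{-1}\bone)_\ell$ follows when $S\setminus I_S$ is a singleton; but a negative entry of $\Sigma_S^{-1}\bone$ is not a per-coordinate certificate of inactivity, and you give no argument ruling out $j\in I_S$ with $(\Sigma_S^{-1}\bone)_j<0$, in which case removing your chosen $j$ need not equalize tail orders or active sets. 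The paper sidesteps this entirely: since $\Sigma_S^{-1}\bone\ngtr\bzero$ implies $I_S\neq S$ (\Cref{lem:aux1}(b)), its \Cref{lem:premaiforGausscop} removes an arbitrary $\ell\in S\setminus I_S$, for which $\kappa_S=\kappa_{S\setminus\{\ell\}}$ and $I_S=I_{S\setminus\{\ell\}}$ hold, so the ratio tends to $\Upsilon_S/\Upsilon_{S\setminus\{\ell\}}>0$. Replacing ``pick $j$ with a nonpositive entry and claim it is inactive'' by ``pick $\ell\notin I_S$'' repairs your argument; it also dissolves what you flag as the main obstacle, since the borderline case $(\Sigma_S^{-1}\bone)_j=0$ needs no finer expansion: $I_S$ is defined in \Cref{lem:quadprog} by strict positivity of the multipliers $h_i^S$, so a zero-multiplier coordinate already lies in $S\setminus I_S$ and the same constant-ratio computation applies. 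Relatedly, your claim $I(S)=I(S\setminus\{j\})$ also needs justification: in your minimal-counterexample setting minimality gives $I_{S\setminus\{j\}}=S\setminus\{j\}$, so matching log-exponents requires $|I_S|=|S|-1$ --- a true fact (derivable, e.g., from boundedness of the ratio by $1$, or from your own increment identity applied along chains between $I_S$ and $S$), but one that must be proved rather than assumed.
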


The proof of the theorem is quite involved, requiring a few auxiliary results based on the recently derived knowledge  on the asymptotic behavior of tail probabilities of a multivariate distribution with identically Pareto marginals and Gaussian copula $C^{\Sigma}$ in \citet{das:fasen:2023a}. \revision{Hence the proof has been relegated to \Cref{app:proofmaiGC}.} Curiously, the ingredients of the proof allow us to find the tail asymptotics of the survival copula of any $|S|$-dimensional marginal in terms of its tail order. 
\begin{proposition}\label{prop:Gausscoptailorder}
      Let 
      $C^{\Sigma}$ be a  Gaussian copula  with positive-definite correlation matrix $\Sigma$. 
   Then  
   for any subset $S\subset \mathbb{I}_d$ with $|S|\ge 2$, we have as $u\downarrow 0$,
   \begin{align}\label{eq:gcuptailorder}
       \widehat{C}_S^{\Sigma}(u,\ldots,u) \sim  u^{\kappa_S}
   \ell_{S}(u)
   \end{align}
   where $\ell_{S}$ is slowly varying at 0  and $$\kappa_S= \min\limits_{\{\bz\in\R^{|S|}:\bz\ge \bone_S\}} \bz^{\top}\Sigma_S^{-1}\bz.$$
\end{proposition}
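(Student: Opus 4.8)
The plan is to rewrite the diagonal of the marginal survival copula as a Gaussian orthant probability with a diverging threshold, invoke the sharp tail asymptotics of such probabilities (whose exponential rate is precisely the value of the quadratic program defining $\kappa_S$), and then translate from the threshold scale back to the $u$-scale. Since a survival copula is invariant under the choice of continuous margins, I would realize $\widehat C_S^\Sigma$ through a standard Gaussian vector: let $\bX_S\sim N(\bzero_{|S|},\Sigma_S)$ and set $t=t(u):=\Phi^{-1}(1-u)$, which increases to $\infty$ as $u\downarrow 0$. Then
\[
  \widehat C_S^\Sigma(u,\ldots,u)
  = \P\big(\Phi(X_j) > 1-u,\ \forall\, j\in S\big)
  = \P\big(\bX_S > t\,\bone_{|S|}\big).
\]
Note that $\Sigma_S$, being a principal submatrix of the positive-definite matrix $\Sigma$, is itself positive definite, so $\Sigma_S^{-1}$ exists and the program $\min_{\bz\ge\bone_S}\bz^\top\Sigma_S^{-1}\bz$ is a well-posed strictly convex quadratic program with a unique minimizer; its optimal value is $\kappa_S$.

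The crux is the decay of the Gaussian orthant probability. Using the auxiliary results underpinning \Cref{thm:maiforGausscop} (drawn from \cite{das:fasen:2024,das:fasen:2023a}), I would establish that there exist a constant $C_S>0$ and an exponent $\gamma_S\in\R$, both determined by the geometry of the minimizer (i.e. by the active-constraint face of the program), such that
\[
  \P\big(\bX_S > t\,\bone_{|S|}\big) \sim C_S\, t^{-\gamma_S}\,\exp\Big(-\tfrac{\kappa_S}{2}\,t^2\Big), \qquad t\to\infty.
\]
Here the exponential rate equals exactly $\kappa_S/2$; this is the point at which the variational characterization $\kappa_S=\min_{\bz\ge\bone_S}\bz^\top\Sigma_S^{-1}\bz$ enters.

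Finally, I would transfer the statement back to the $u$-scale. The Mills-ratio asymptotic gives $u=1-\Phi(t)\sim(2\pi)^{-1/2}t^{-1}e^{-t^2/2}$, equivalently $e^{-t^2/2}\sim\sqrt{2\pi}\,t\,u$ and $t\sim\sqrt{-2\log u}$. Substituting,
\[
  \widehat C_S^\Sigma(u,\ldots,u)
  \sim C_S\, t^{-\gamma_S}\big(\sqrt{2\pi}\,t\,u\big)^{\kappa_S}
  = C_S(2\pi)^{\kappa_S/2}\, t^{\,\kappa_S-\gamma_S}\, u^{\kappa_S}.
\]
Since $t=\Phi^{-1}(1-u)\sim\sqrt{-2\log u}$, any fixed power $t^{\beta}$ is slowly varying at $0$ (testing $\ell(uz)/\ell(u)$ reduces to $(\log(uz)/\log u)^{\beta/2}\to 1$ because $\log u\to-\infty$). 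Hence setting $\ell_S(u):=C_S(2\pi)^{\kappa_S/2}\big(\Phi^{-1}(1-u)\big)^{\kappa_S-\gamma_S}$ produces a slowly varying function and establishes \eqref{eq:gcuptailorder} with tail order $\kappa_S$.

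The main obstacle is the sharp orthant asymptotics of the second step: pinning down that the exponential rate is exactly $\kappa_S/2$ and that the sub-exponential correction is merely a power of $t$ requires understanding where the constrained minimizer of $\bz^\top\Sigma_S^{-1}\bz$ lies (which constraints are active, hence the effective dimension of the boundary face), which is precisely the delicate content imported from \cite{das:fasen:2024,das:fasen:2023a}. By contrast, the marginal reduction in the first step and the change of scale in the last step are routine bookkeeping once this input is in hand.
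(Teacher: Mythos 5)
Your proof is correct, and it follows the same underlying strategy as the paper --- exploit the margin-invariance of the survival copula to realize $\widehat C^{\Sigma}_S(u,\ldots,u)$ as a concrete joint tail probability, then import a sharp tail asymptotic whose exponential/polynomial rate is governed by the quadratic program $\min_{\bz\ge\bone_S}\bz^{\top}\Sigma_S^{-1}\bz$ --- but it instantiates it differently. The paper (via \Cref{prop:gcsurvcop}) chooses standard \emph{Pareto} margins, so that $\widehat C^{\Sigma}_S(u\bv_S)=\P(Z_s>u^{-1}v_s^{-1},\,\forall s\in S)$ and the Pareto-scale result \cite[Theorem 1]{das:fasen:2023a} applies verbatim with $t=u^{-1}$, giving $\widehat C^{\Sigma}_S(u,\ldots,u)\sim \Upsilon_S(2\pi)^{\kappa_S/2}u^{\kappa_S}(-2\log u)^{(\kappa_S-|I_S|)/2}$ in two lines; you instead choose standard \emph{Gaussian} margins, reduce to the orthant probability $\P(\bX_S>t\bone)$ with $t=\Phi^{-1}(1-u)$, invoke the Gaussian-scale asymptotic $C_S\,t^{-\gamma_S}e^{-\kappa_S t^2/2}$ (this is Hashorva--H\"usler territory, cf.\ \cite{hashorva:husler:2002}, rather than the results you gesture at), and carry out the Mills-ratio change of scale by hand. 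Your conversion is exactly the computation that the Pareto-scale theorem packages away, and comparing your output $t^{\kappa_S-\gamma_S}u^{\kappa_S}$ with $t\sim\sqrt{-2\log u}$ against the paper's formula identifies $\gamma_S=|I_S|$, so the two asymptotics agree. What your route buys is transparency: it makes visible where the slowly varying factor comes from (the polynomial prefactor of the Gaussian orthant tail pushed through the Mills ratio) and needs only the classical orthant asymptotics. What the paper's route buys is brevity plus an explicit representation of $\ell_S$ --- including the constant $\Upsilon_S$ and the exact power of $-2\log u$, and the dependence on $\bv_S$ through $\prod_{s\in I_S}v_s^{h_s^S}$ --- which it reuses in the proofs of \Cref{lem:premaiforGausscop} and \Cref{thm:maiforGausscop}, where ratios of these survival copulas must be evaluated. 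Your only blemishes are bookkeeping-level: the attribution of the orthant asymptotic should point at the Hashorva--H\"usler-type results underlying \cite[Theorem 1]{das:fasen:2023a}, and leaving $\gamma_S$ abstract is fine for the tail order but would not suffice for the paper's later arguments.
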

\revision{A proof of this result is given in \Cref{app:proofmaiGC} as well.}


\begin{remark}
    A few interesting features of \Cref{prop:Gausscoptailorder} and related results  are to be noted here.
    \begin{enumerate}[(a)]
        \item Although \Cref{prop:Gausscoptailorder} only gives the tail order of  $\widehat{C}^{\Sigma}_S$, in fact, the exact tail asymptotics for $\widehat{C}^{\Sigma}_S(u\bv_S)$ as $u\downarrow 0$ for $\bv_S=(v_s)_{s\in S}$, $v_s\in(0,1)$, including the slowly varying function is available in \Cref{prop:gcsurvcop} in  \Cref{app:proofmaiGC}.
        \item The upper tail order $\kappa_S$ in \eqref{eq:gcuptailorder} is obtained as a solution to a quadratic programming problem; the exact solution is given in \Cref{lem:quadprog} in  \Cref{app:proofmaiGC}.
        \item With respect to \eqref{eq:gcuptailorder}, for subsets $S,T \subset \mathbb{I}_d$ with $S\subsetneq T$ and $|S|\ge 2$, it is possible to have (i) $\kappa_S<\kappa_T$, (ii) $\kappa_S=\kappa_T$, with $\ell_S(u)\sim c\,\ell_T(u), u\downarrow 0$ for $c>0$, and (iii) $\kappa_S=\kappa_T$, with $\ell_S(u)= o\left(\ell_T(u)\right), u\downarrow 0$. In \Cref{ex:gc3d}, we can observe both (i) and (ii) holding under different assumptions; an example for (iii) with Pareto marginals and Gaussian copula is available in \citet[Remark 5]{das:fasen:2023a}.
        \item \revision{In \citet[Example 1]{hua:joe:2011}, the authors already state that the tail order $\kappa$ of a Gaussian copula with positive definite correlation matrix $\Sigma$ is $\kappa=\bone^\top\Sigma^{-1}\bone$ (cf. \citet[Section 4.3.2]{joe:2014}). 
        However, to the best of our knowledge, the aforementioned paper does not specify that $\Sigma^{-1}\bone>\bzero$ is indeed a necessary condition for the result, since otherwise the statement is not valid; in fact, if $\Sigma^{-1}\bone\ngtr\bzero$ then  
        $\kappa<\bone^\top\Sigma^{-1}\bone$ is possible (cf. \Cref{lem:quadprog}).
        }
    \end{enumerate}
\end{remark}

\begin{example}\label{ex:gc3d}
For the purpose of illustration, we provide a positive-definite correlation matrix (with $d=3$) for a Gaussian copula parametrized by a single parameter $\rho$ which exhibits mutual asymptotic independence for only certain values of $\rho$ and only pairwise asymptotic independence for other feasible values;  see \citet[Example 1(b)]{das:fasen:2023a} for further details. Throughout, we denote by $\ell_j(u), j\in \mathbb{N}$,  a slowly varying function at 0. Consider the Gaussian copula $C^{\Sigma}$ with correlation matrix
\begin{align*}
    \Sigma=\begin{pmatrix}
1 & \rho & \sqrt{2}\rho\\
\rho & 1 & \sqrt{2}\rho \\
\sqrt{2}\rho  & \sqrt{2}\rho  & 1
\end{pmatrix},
\end{align*}
 where $\rho\in \left((1-\sqrt{17})/8, (1+\sqrt{17})/8\right)\approx (-0.39, 0.64)$ which ensures the positive definiteness of $\Sigma$. Clearly, pairwise asymptotic independence holds for all such $\rho$ values.
\begin{itemize}
 \item[(i)] Suppose $\rho<1/({2\sqrt{2}-1}) \approx 0.55$. Then one can check that $\Sigma^{-1}\bone>\bzero$, and hence, mutual asymptotic independence holds as well. In fact, we can find the behavior of the survival copula (using \Cref{prop:gcsurvcop} or, \citet[Example 1(b)]{das:fasen:2023a}): As $u\downarrow 0$,
 \begin{align}
     \widehat{C}^{\Sigma}(u,u,u) & \sim  u^{\frac{3-(4\sqrt{2}-1)\rho}{1+\rho-4\rho^2}} \ell_1(u). \label{eq:gc3rholt0p55}
 \intertext{We also find that as $u\downarrow 0$,}
 \begin{split}
     \widehat{C}_{\{13\}}^{\Sigma}(u,u) & =\widehat{C}_{\{23\}}^{\Sigma}(u,u) \sim  u^{\frac{2}{1+\sqrt{2}\rho}} \ell_2(u),  \quad \text{and,}\\ 
     \widehat{C}_{\{12\}}^{\Sigma}(u,u) & \sim  u^{\frac{2}{1+\rho}} \ell_3(u),      \end{split}\label{eq:gc3rholt0p55_M12}
 \end{align}
 \item[(ii)]
On the other hand, if $\rho \ge 1/({2\sqrt{2}-1})$, then $\Sigma^{-1}\bone \ngtr \bzero$ and the copula does not have mutual asymptotic independence.  Note that in this case, the behavior of the two-dimensional marginal survival copulas will still be given by \eqref{eq:gc3rholt0p55_M12}, but the tail behavior as seen in \eqref{eq:gc3rholt0p55} does not hold anymore. Now, as $u\downarrow 0$, we have
\begin{align*}
     \widehat{C}^{\Sigma}(u,u,u) & \sim  u^{\frac{2}{1+\rho}} \ell_4(u). 
 \end{align*}
 In fact we can check that $\ell_4(u)\sim \beta\,\ell_3(u)$ as  $u\downarrow 0$ for some constant $\beta>0$ (\citet[Example 1(b)]{das:fasen:2023a}), and hence
 \[ \widehat{C}^{\Sigma}(u,u,u) \sim \beta\, \widehat{C}_{\{12\}}^{\Sigma}(u,u), \quad u\downarrow 0,  \]
 also verifying that mutual asymptotic independence does indeed not hold here.
 \end{itemize}
\end{example}

\section{$k$-wise asymptotic independence}\label{sec:kwiseasyind}
The fact that some multivariate models exhibit pairwise asymptotic independence yet not mutual asymptotic independence naturally 
prompts the inquiry into the existence of models that lie in between these two notions. The following definition provides an answer.
\begin{definition}[$k$-wise asymptotic independence]\label{def:kai}
     A random vector $\bZ\in \R^d$ \revision{with continuous marginal distributions } and survival copula $\widehat C$ is \emph{$k$-wise asymptotically independent} for a fixed $k\in \{2,\ldots, d\}$, if for all $S\subseteq\mathbb{I}_d$ with $2\le |S|\le k$, we have
     \begin{align*}
     \lim_{u\downarrow 0} \frac{\widehat{C}_S(u,\ldots,u)}{\widehat{C}_{S\setminus \{\ell\}}(u,\ldots,u)} 
     = 0, \quad \forall\,\, \ell \in S,
     \end{align*}
 where we define $0/0:=0$. If $\bZ\sim F$ has copula $C$, we interchangeably say $\bZ, F, C$ or $\widehat{C}$ possesses $k$-wise asymptotic independence.
 \end{definition}

\revision{
\begin{remark}
The concept of $k$-wise asymptotic independence is a measure of dependence in the extremes. If a random vector $\bZ$ exhibits $k$-wise asymptotic independence but not $(k+1)$-wise asymptotic 
independence, then there exists a combination of at exactly $k$ components in $\bZ$, so that when these are large another additional component is large as well. Fewer than $k$ large components cannot produce a large value in another component.
Consequently, lower values of $k$ reflect a stronger dependence in the extremes.
\end{remark}
}

Note that for any $d$-dimensional copula, $d$-wise asymptotic independence is the same as mutual asymptotic independence (and of course \emph{2-wise} is the same as \emph{pairwise}). Again, following \Cref{prop:maiimppai}, we may check that mutual asymptotic independence indeed implies $k$-wise asymptotic independence for all $k\in \{2,\ldots,d\}$.  The converse of the previous implication is, of course, not true;  the examples in the following section also show this.

Obviously, an equivalent characterization of  $k$-wise asymptotic independence is the following.

\begin{proposition} \label{Remark 4.3}
A random vector $\bZ$ in $\mathbb{R}^d$ is $k$-wise asymptotically independent if and only if for all $S \subseteq \mathbb{I}_d$ with $|S| = k$, the random vector $\bZ_S$ in $\mathbb{R}^k$ is mutually asymptotically independent.
\end{proposition}

\subsection{Examples}

Indeed, within the class of Archimedean copulas as well as the class of Gaussian copulas with dimensions $d>2$, we find examples of models which exhibit $k$-wise asymptotic independence, but not $(k+1)$-wise asymptotic independence given any $k\in \{2, \ldots, d-1\}$. Consequently, these models are also not mutually asymptotically independent. Let us begin with an investigation of a particular Archimedean copula.

\subsubsection{ACIG copula}\label{ex:acig}
 This
 Archimedean copula based on the Laplace transform (LT) of an Inverse Gamma distribution, called ACIG copula in short, was introduced in \citet{hua:joe:2011}, i.e, if $Y=X^{-1}$ and $X\sim \text{Gamma}(\alpha,1)$ for $\alpha>0$, then the generator of this Archimedean copula is given by the LT of $Y$.
 The expression of the generator includes the Bessel function of the second kind. Closed-form expressions of the copula $C^\phi$ and survival copula $\widehat C^\phi$ are not easy to write down; nevertheless,  from computations in \citet[Example 4]{hua:joe:2011} and \citet[Example 4.4]{hua:joe:li:2014}, we can conclude that for any $d\ge 2$, the survival copula of the ACIG copula with parameter $\alpha>0$ has the following asymptotic behavior:
 \begin{align}\label{eq:acigtailorder}
     \widehat{C}^\phi(u,\ldots,u) \sim  \beta_d u^{\kappa_d}, \qquad u\downarrow 0,
 \end{align}
 where $\kappa_d=\max\{1,\min\{\alpha,j\}\}$ and $\beta_d>0$ is a positive constant. Here, $\kappa_d$ is the tail order of the copula. Therefore, if $\alpha\le 1$ then $\kappa_d=1$ for all $d\ge 2$, and if  $\alpha>1$, then $\kappa_d=\min(\alpha,d)$. Note that by the exchangeability property of Archimedean copulas and \eqref{eq:acigtailorder}, we know that for any $S\subset\mathbb{I}_d$ with $|S|\ge 2$,
  \begin{align*}
     \widehat{C}_S^\phi(u,\ldots,u)  \sim  \beta_{|S|} u^{\kappa_{|S|}}, \quad\ u\downarrow 0.
 \end{align*} 
 Thus, we may conclude that for an ACIG copula with parameter $\alpha>0$, the following holds:
 \begin{enumerate}[(i)]
     \item If $0<\alpha\le 1$, the ACIG copula  exhibits asymptotic upper tail dependence.
     \item If $1<\alpha\le d-1$, the ACIG copula exhibits pairwise asymptotic independence but not mutual asymptotic independence. If additionally $k-1<\alpha\leq k$ for $k\in\{2,\ldots, d-1\}$, then the ACIG copula still exhibits $i$-wise asymptotic independence for all $i\in\{2,\ldots, k\}$, but not $(k+1)$-wise asymptotic independence.
     \item If $\alpha>d-1$, the ACIG copula exhibits $k$-wise asymptotic independence for all $k\in \{2, \ldots, d\}$, {and hence mutual asymptotic independence as well.}
 \end{enumerate}

\subsubsection{Gaussian copula}\label{ex:acig}
{The Gaussian copula has been popular in modeling dependence in a wide variety of applications. It turns out that a class of Gaussian copula models is also able to capture the presence of $k$-wise asymptotic independence and not $(k+1)$-wise asymptotic independence. This is demonstrated in the following result, \revision{whose proof is given in \Cref{Appendix B}.}}

\begin{theorem}  \label{Theorem 4.4}
  \revision{Suppose $k \in \{2, \ldots, d-1\}$ and $S_1\subseteq S_2\subseteq \{1,\ldots,d\}$ with $\vert S_1\vert=k$ and  $\vert S_2\vert=k+1$. Then there exists 
  a Gaussian copula $C^{\Sigma}$ } and a positive-definite correlation matrix $\Sigma$, such that $C^{\Sigma}$ exhibits $k$-wise asymptotic independence but not $(k+1)$-wise asymptotic independence and for any $x>0$,
  \begin{eqnarray} \label{eq.4.2}
        \lim_{u\downarrow 0}\frac{\widehat C^{\Sigma}_{S_2}(u,\ldots,u,xu,u,\ldots,u)}{\widehat C^{\Sigma}_{S_1}(u,\ldots,u)}=1, 
  \end{eqnarray}
  where $xu$ is placed at the unique element in $S_2\backslash S_1$.
\end{theorem}

This theorem not only provides the existence of a $k$-wise asymptotically independent Gaussian copula, but it also gives the striking feature of the copula behavior in \eqref{eq.4.2} where, surprisingly the value of $x$ has no influence. \revision{
This means that for a random vector $\bZ$ with Gaussian copula $C^{\Sigma}$, as given in \Cref{Theorem 4.4}, and identically distributed marginals,
large values in all the $S_1$ components result in an extremely large value in the single component of $S_2\backslash S_1$; hence there is a strong dependence between the extremes of the components of $S_1$ and that of  $S_2\backslash S_1$.
All components in $S_1$ must be large at the same time; only a few components that are large do not result in a large value in the $S_2\backslash S_1$ component. This is demonstrated quite nicely in the following example, which was used in the proof of \Cref{Theorem 4.4}. 

\begin{example}
Let the correlation matrix of the Gaussian copula be given as
\begin{eqnarray*}
	\Sigma=\begin{bmatrix}
		\II_{d-1}\;\;\; & \rho \bone_{d-1}\\[1em]
           \rho \bone_{d-1}^{\top} & 1
\end{bmatrix}
\end{eqnarray*}
for some $\rho\in(0,1)$. Then 
for $\rho\in(1/(k-1),1/(k-2))$ the Gaussian copula $\widehat C^{\Sigma}$ is $(k-1)$-wise asymptotically independent but not  $k$-wise asymptotically independent (see proof of \Cref{Theorem 4.4}); therefore, if the first $(k-1)$ components are jointly large then as well the last component is large. But if we consider fewer components than $(k-1)$ components to be large, they have no effect on the size of the last component. 
Note that here $k-1=\lceil \rho^{-1}\rceil=\inf\{m\in \N:\rho^{-1}\leq m\}$. Thus, for a high value of $\rho$, fewer components, namely only the first $\lceil \rho^{-1}\rceil$ components, result in a large value in the last component. Although $\rho$ provides a measure of linear dependence, in this example, it also measures the dependence in the extremes which is reflected in the degree $\lceil \rho^{-1}\rceil$ of asymptotic independence.
\end{example}
}

\medskip

For the derivation of worst-case measures for risk contagion under distributional ambiguity in the next section, \revision{\Cref{Theorem 4.4}} turns out to be indispensable.

\begin{remark}
It is important to mention here that although {most popular copula families are bivariate in nature}, \citet[Chapter 4]{joe:2014} lists multiple extensions of bivariate copulas to general high dimensions; many such copulas can be explored for creating models with particular levels of asymptotic independence as necessitated by the context.  
\end{remark}

\section{Implication on risk management under distributional ambiguity}\label{sec:dro}
In financial risk management, a variety of risk measures are used to assess the risk contagion between different financial products, including stocks, bonds and equities. Such contagion or systemic risk measures are often based on conditional probabilities and range from computing regular conditional tail probabilities to CoVaR, marginal expected shortfall (MES), marginal mean excess (MME), and more; see \cite{das:fasen:2018, adrian:brunnermeier:2016} for details.  

Here, we focus on two such measures of risk contagion based on specific conditional tail probabilities and conditional tail quantiles. First, recall that for a random variable $Z$, the \emph{Value-at-Risk} or $\VaR$ at level $1-\gamma\in (0,1)$ is defined as 
\begin{align*}
\VaR_{\gamma}(Z) &:= \inf\big\{y\in\R: \P(Z>y)\le \gamma\big\}=\inf\big\{y\in\R: \P(Z\le y) \geq 1-\gamma\big\},
\end{align*} 
the $(1-\gamma)$-quantile of $Z$ where $\inf \emptyset:=\infty$  (cf. \cite{Embrechts:Hofert:Chavez}).
If $Z\sim F$ is continuous, then $F$ is invertible with inverse $F^{-1}$ and for $0<\gamma<1$ we have $\VaR_{\gamma}(Z)= F^{-1}(1-\gamma)$. 

Consider the returns from a portfolio of $d>1$ stocks being given by the random vector $\bZ=(Z_1,\ldots, Z_d)\sim F$. Suppose we are interested in measuring the risk of $Z_1$ having an extremely large value, given that all variables in some {non-empty} subset $J\subset \mathbb{I}_d\backslash \{1\}$ with $\vert J\vert =\ell$ are at extremely high levels. This can be captured via
the following \emph{conditional tail probability} 
    \begin{align}\label{eq:condprob}
            \P\left(Z_1>t | Z_j>t,  \,\forall j\,\in J\right),
    \end{align}
    as $t\to\infty$. 
    Alternatively, for a level $\gamma\in (0,1)$, we are interested in the risk measure
    \begin{align}\label{eq:CTP}
        \CTP_{\gamma}(\bZ_{1|J}) :=\P\left(Z_1>\VaR_{\gamma}(Z_1) | Z_j>\VaR_{\gamma}(Z_j), \,\forall j\,\in J\right)
    \end{align}
     as $\gamma\to0$.
  Note that \eqref{eq:condprob} and \eqref{eq:CTP} are equivalent if all the marginal random variables $Z_1,\ldots, Z_d$ are identically distributed. For convenience, we will focus on the measure $\CTP_{\gamma}$ as defined in \eqref{eq:CTP}.
  
A second measure of risk contagion we are interested in is a generalization of  the $\VaR$ to the multivariate setting given by the {\emph{Contagion Value-at-Risk or CoVaR}} at confidence level $(\gamma_1,\gamma_2)$ for $\gamma_1,\gamma_2\in(0,1)$ defined as
  \begin{eqnarray}\label{eq:covar}
    \text{CoVaR}_{\gamma_1,\gamma_2}(\bZ_{1|J}):=\inf\{z\in\R_+:
    \P(Z_1>z\vert Z_j>\VaR_{\gamma_2}(Z_j),\,\forall j\in J)\leq \gamma_1\}.
\end{eqnarray}
 The risk measure CoVaR was introduced in the bivariate setting for $J={2}$ to capture risk contagion, as well as systemic risk by \citet{adrian:brunnermeier:2016} where they used the conditioning event to be $Z_2=\VaR_{\gamma_2}(Z_2)$; this was later modified by \citet{girardi:ergun:2013} to $Z_2>\VaR_{\gamma_2}(Z_2)$ with the restriction that $\gamma_1=\gamma_2$; this latter definition has been widely used in dependence modeling \citep{nolde:zhou:zhou:2022,Mainik:Schaaning,hardle:wang:yu:2016,das:fasen:2024} {and is generalized in our definition given in \eqref{eq:covar}. }
  


In risk management applications,  computing quantities like CTP and CoVaR requires knowledge of the joint distribution of the risk vector $\bZ$.
Even if the univariate distributions of all the marginal variables can be estimated, the joint distribution often remains unknown and relatively more involved for estimation purposes.
 An approach often used is to provide a worst-case value for such risk measures under certain constraints on the joint distribution of the variables. Naturally, for such tail risk measures, constraints can be provided in terms of their joint asymptotic tail behavior, including pairwise, mutual or $k$-wise asymptotic independence. It turns out that under different constraints, we may obtain a different tail behavior for the worst-case measures. To further this discussion, let us define $\mathcal{P}$ to be the class of all probability distributions in $\R^d$ with continuous marginal distributions. 
\revision{%
For $k\in \{2,\ldots,d\}$ define the classes of distributions
\begin{align*} 
    \mathcal{P}_k &:= \{F\in \mathcal{P}: F \text{ possesses  $k$-wise asymptotic independence}\},
\intertext{and similarly, the restrictions to distributions with Gaussian copulas}
 \mathcal{N}_k&:=\mathcal{P}_k\cap\{F\in \mathcal{P}:F\text{ has a Gaussian copula }C^{\Sigma} \text{ with } \Sigma \text{ positive definite}\}.
\end{align*}
}
Note that $\mathcal{P}_2$ models the class of pairwise asymptotically independent random vectors, whereas $\mathcal{P}_d$ models the class of mutually asymptotically independent random vectors. By Definition \ref{def:kai}, it is easy to check that  
\begin{align}\label{eq:Pinclusions}
  \mathcal{P} \supseteq \mathcal{P}_2 \supseteq \mathcal{P}_3 \supseteq \cdots \supseteq{\mathcal{P}}_d \quad \text{ and } \quad \mathcal{P} \supseteq \mathcal{N}_2 \supseteq \mathcal{N}_3 \supseteq \cdots \supseteq{\mathcal{N}}_d.  
\end{align}
Furthermore, these classes are non-empty, since 
$\mathcal{N}_k\neq\emptyset $ by \Cref{Theorem 4.4}
and $\mathcal{N}_k\subseteq \mathcal{P}_k$.



Since the joint distributions are unknown we may want to find the worst case $\CTP$ or CoVaR in such cases where $F\in \mathcal{P}_k \subset \mathcal{P}$ or $F\in \mathcal{N}_k$, $k\in\{2,\ldots,d\}$. First, we present the result for the $\CTP$. \revision{The proof of this theorem and all subsequent results in this section are given in \Cref{Appendix C}.}

\begin{theorem} \label{Theorem 5.1}
    Let $\bZ=(Z_1,\ldots,Z_d)\sim F$, $d\ge2$, has continuous marginal distributions. 
    Furthermore, suppose $J\subset \mathbb{I}_d\backslash \{1\}$ with $\vert J\vert =\ell$. 
    \begin{enumerate}[(a)]
        \item If $k\in\{\ell+1,\ldots,d\}$, then
    \begin{eqnarray*}
     \sup_{F\in \mathcal{N}_k}\lim_{\gamma\downarrow 0} \CTP_{\gamma}(\bZ_{1|J})= \sup_{F\in \mathcal{P}_k}\lim_{\gamma\downarrow 0} \CTP_{\gamma}(\bZ_{1|J}) =0. 
     \end{eqnarray*}
     \item If $k\in \{2,\ldots,\ell\}$, then
       \begin{eqnarray*}          
        \sup_{F\in \mathcal{N}_k}\lim_{\gamma\downarrow 0} \CTP_{\gamma}(\bZ_{1|J}) =\sup_{F\in \mathcal{P}_k}\lim_{\gamma\downarrow 0} \CTP_{\gamma}(\bZ_{1|J}) =1. 
        \end{eqnarray*}
    \end{enumerate}
\end{theorem}

The results indicate a qualitatively different behavior of the worst-case CTP depending on whether the tail dependence exhibits $k$-wise asymptotic independence with $k > \vert J\vert$ vis-a-vis $k \leq \vert J\vert$. When $k > \vert J\vert$, $\CTP_{\gamma}(\bZ_{1|J})$ converges to $0$ as $\gamma \downarrow 0$, suggesting that extreme large losses of $Z_j$ for all $j \in J$ have a negligible influence on extreme large losses of $Z_1$. In contrast, when $k \leq \vert J\vert$, there exists a $k$-wise asymptotically independent distribution function $F$, which is also pairwise asymptotically independent, such that extremely large losses of $Z_j$ for all $j \in J$ result, with a probability converging to $1$, in an extremely large loss of $Z_1$. In particular, for the Gaussian copula that is an astonishing result because it is in contrast to the belief that there are no joint extremes. This shows 
that for measuring risk contagion it is important to distinguish between these different concepts of tail independence 
and assuming an improper notion of asymptotic independence for our risk portfolio may lead to either underestimation or overestimation of the risk contagion.

In the following, we investigate the asymptotic behavior of the measure $\CoVaR$. For technical reasons, we restrict the class 
$\mathcal{P}_k$ slightly, in particular we will assume that $F_1$, the distribution of $Z_1$, is Pareto distributed, i.e., $F_1(z)=1-z^{-\alpha}$, $z\geq 1$, for some $\alpha>0$. 
\revision{Suppose
$\mathcal{P}^*:=\{F\in \mathcal{P}:F_1\text{ is Pareto distributed}\}$.
For $k\in \{2,\ldots,d\}$ define the classes  
\begin{align*}
    \mathcal{P}^*_k&:=\mathcal{P}_k\cap\left\{F\in \mathcal{P}^*:\sup_{\gamma\in(0,x^{-1}]}\frac{\widehat C_S(x\gamma,\gamma,\ldots,\gamma)}{\widehat C_S(\gamma,\gamma,\ldots,\gamma)}<\infty, \,\forall\, S\subseteq \mathbb{I}_d, \forall\,x\geq 1\right\}\subseteq \mathcal{P}_k,
\intertext{and}
    \mathcal{N}_k^*&:=\mathcal{N}_k\cap  \mathcal{P}^*\subseteq \mathcal{N}_k.
\end{align*}
}

\begin{remark}
 Instead of assuming that $F_1$ follows a Pareto distribution, it is possible to consider a broader class, allowing $F_1$ to have a regularly varying tail. {However, this approach makes the proofs more technical without providing any further valuable insights, hence, we have exhibited our results for the smaller class $\mathcal{P}_k^*$ for the purpose of exposition.}
\end{remark}

Although we reduce the class  $\mathcal{P}_k$ to $\mathcal{P}_k^*$, it still remains quite large and contains, in particular, $k$-wise asymptotically independent Gaussian copulas (with $F_1$ Pareto distributed).
\begin{lemma} \label{Lemma 5.3}
    $\mathcal{N}_k^*\subseteq \mathcal{P}_k^*$ 
     for $k\in \{2,\ldots,d\}$.
\end{lemma}


By restricting our consideration to the sets $\mathcal{P}^*_k$ and $\mathcal{N}^*_k$, we derive the subsequent result concerning the asymptotic behavior of the $\CoVaR$.

\begin{theorem} \label{Theorem 5.4}
    Let $\bZ=(Z_1,\ldots,Z_d)\sim F$, $d\ge2$, has continuous marginal distributions  and $F_1$ is a Pareto distribution. Furthermore, let $J\subset \mathbb{I}_d\backslash \{1\}$ with $\vert J\vert =\ell$. 
    \begin{enumerate}[(a)]
        \item If $k\in\{\ell+1,\ldots,d\}$, then for any $\gamma_1\in(0,1)$,
    \begin{eqnarray*}
     \sup_{F\in \mathcal{N}_k^*}\lim_{\gamma_2\downarrow 0} 
     \frac{\text{CoVaR}_{\gamma_1,\gamma_2}(\bZ_{1|J})}{\VaR_{\gamma_2}(Z_1)}=\sup_{F\in \mathcal{P}^*_k}\lim_{\gamma_2\downarrow 0} 
     \frac{\text{CoVaR}_{\gamma_1,\gamma_2}(\bZ_{1|J})}{\VaR_{\gamma_2}(Z_1)}=0.
     \end{eqnarray*}
     \item If $k\in \{2,\ldots,\ell\}$, then for any $\gamma_1\in(0,1)$,
       \begin{eqnarray*}          
        \sup_{F\in \mathcal{N}_k^*}\lim_{\gamma_2\downarrow 0} 
     \frac{\text{CoVaR}_{\gamma_1,\gamma_2}(\bZ_{1|J})}{\VaR_{\gamma_2}(Z_1)}=\sup_{F\in \mathcal{P}_k^*}\lim_{\gamma_2\downarrow 0} 
     \frac{\text{CoVaR}_{\gamma_1,\gamma_2}(\bZ_{1|J})}{\VaR_{\gamma_2}(Z_1)}=\infty. 
        \end{eqnarray*}
    \end{enumerate}
\end{theorem}
Akin to the case of finding for the worst-case CTP, we observe that the worst-case CoVaR also has a qualitatively different behavior depending if the tail dependence exhibits $k$-wise asymptotic independence with $k> \vert J\vert$, or with $k\leq  \vert J\vert$. 
When 
$k>\vert J\vert$,   the
ratio  $\text{CoVaR}_{\gamma_1,\gamma_2}(\bZ_{1|J})/{\VaR_{\gamma_2}(Z_1)}$ converges to $0$, reflecting 
that $\text{CoVaR}_{\gamma_1,\gamma_2}(\bZ_{1|J})$ increases at a negligible rate in comparison to $\VaR_{\gamma_2}(Z_1)$ as $\gamma_2\downarrow 0$ and that $\text{CoVaR}_{\gamma_1,\gamma_2}(\bZ_{1|J})$ is relatively small, i.e, the required risk reserve capital is low. 
But if $k\leq\vert J\vert$, there exists a $F\in\mathcal{N}_k^*\subseteq\mathcal{P}_k^*$ where $\text{CoVaR}_{\gamma_1,\gamma_2}(\bZ_{1|J})/{\VaR_{\gamma_2}(Z_1)}$ converges to $\infty$, so that $\text{CoVaR}_{\gamma_1,\gamma_2}(\bZ_{1|J})$ may increase much faster to $\infty$ than $\VaR_{\gamma_2}(Z_1)$ as $\gamma_2\downarrow 0$, giving a relatively high $\text{CoVaR}_{\gamma_1,\gamma_2}(\bZ_{1|J})$ and a higher reserve risk capital requirement.

\begin{remark}
Computations analogous to the ones carried out in this section, can also be done for other measures of risk contagion, for example, the marginal expected shortfall (MES), or, the marginal mean excess (MME) \citep{cai:einmahl:dehaan:zhou:2015, das:fasen:2018}; but, similar to the case of computing CoVaR, we need to restrict $\mathcal{P}_k$ to smaller classes satisfying various technical conditions. We leave these pursuits for the interested researchers to explore in the future.
\end{remark}

\section{Conclusion}\label{sec:concl}
In this paper, we provide a notion of multivariate asymptotic independence that is useful in comparing extreme events in different dimensions beyond mere pairwise comparisons, \revision{which has been traditionally used in the literature}.  This parallels the dichotomy of mutual independence vis-a-vis pairwise independence for multivariate random vectors.
We believe this new notion also provides an alternate pathway for characterizing extremal dependence for high-dimensional problems relating to tail events. 
We have illustrated using examples of particular copula models, including a few from the Archimedean family along with the Gaussian and the Marshall-Olkin copula. The copulas considered often exhibit at least pairwise asymptotic independence if not mutual asymptotic independence. For both Archimedean and Gaussian copulas, we presented examples exhibiting not only mutual asymptotic independence but also exhibiting only pairwise asymptotic independence but not mutual asymptotic independence. In particular, for the Gaussian copula, this result is quite striking since it is in contrast to the common belief that the Gaussian copula does not allow joint extremes. We have also introduced the concept of $k$-wise asymptotic independence which generalizes these two notions \revision{(pairwise and mutual)} and brings them under the same umbrella. Here we have shown that for any $k\in \{2,\ldots,d\}$ there exists a $k$-wise asymptotically independent Gaussian copula (which is not $(k+1)$-wise asymptotically independent if $k<d$). \revision{Moreover, we have exhibited that these} assumptions of different notions of asymptotic tail independence significantly impact measures of risk contagion within a financial system, such as conditional tail probabilities (CTP) or {Contagion} Value-at-Risk (CoVaR), depending on the specific context. Overlooking these concepts and assuming merely pairwise asymptotic independence for models \revision{may often} lead to a \revision{significant} underestimation of risks.

\bibliographystyle{imsart-nameyear}
\bibliography{bibfilenew}

\appendix

\section{Proofs of Section \ref{sec:mai}  }\label{app:proofmaiGC}

First, we present some auxiliary results required for the proof of 
\Cref{thm:maiforGausscop}.
The following lemma is from \citet[Proposition 2.5 and Corollary 2.7]{hashorva:husler:2002}.  

\begin{lemma}\label{lem:quadprog}
Let $\Sigma\in \R^{d\times d}$ be a positive-definite correlation matrix. Then for any $S\subset \mathbb{I}_d$ with $|S|\ge 2$, the quadratic programming problem
\begin{align*} 
  \mathcal{P}_{\Sigma^{-1}_S}: \min_{\{\bz\in\R^{|S|}:\bz\ge \bone_S\}} \bz^\top \Sigma^{-1}_S\bz
\end{align*}
  has a unique solution $\be^S\in\R^d$ such that
    \begin{align*} 
     \kappa_{S}:=\min_{\{\bz\in\R^{|S|}:\bz\ge \bone_S\}} \bz^\top \Sigma_S^{-1} \bz=\be^{S\,\top} \Sigma_S^{-1}\be^{S}>1.
\end{align*}
Moreover, there exists a unique non-empty index set  $I_{S}\subseteq S$ with  $J_S:=S\setminus I_S$ such that  the unique solution $\be^S$ is given by
\begin{align*}
    \be^S_{I_S}&=\bone_{I_S}, \\
    \be^S_{J_S}&=-[\Sigma_S^{-1}]_{J_SJ_S}^{-1}[\Sigma_S^{-1}]_{J_SI_S}\bone_{I_S}
    \geq \bone_{J_S},
\end{align*}
and $ \bone_{I_S}\Sigma_{I_S}^{-1}\bone_{I_S}=\be^{S\,\top} \Sigma_S^{-1}\be^{S}=\kappa_S>1$ as well as  $\bz^{\top}\Sigma_S^{-1}\be^S=\bz_{I_S}^{\top}\Sigma_{I_S}^{-1}\bone_{I_S} $ $\forall\,\bz\in\R^{|S|}$. 
Also defining   $h_i^S:=e_i^\top \Sigma_{I_S}^{-1}\bone_{I_S}$ for $i\in I_S$ where $e_i$ has only one non-zero entry 1 at the $i$-th co-ordinate, we have $h_i^S>0 \; \forall i\in I_S$.
\end{lemma}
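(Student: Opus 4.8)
The plan is to treat the program $\mathcal{P}_{\Sigma_S^{-1}}$ as a strictly convex quadratic program and to read off all the claimed structure from its Karush--Kuhn--Tucker (KKT) conditions. First I would settle existence and uniqueness of the minimizer. Since $\Sigma_S$ is positive definite, so is $\Sigma_S^{-1}$, hence $\bz\mapsto\bz^\top\Sigma_S^{-1}\bz$ is strictly convex and coercive, while the feasible set $\{\bz\in\R^{|S|}:\bz\ge\bone_S\}$ is nonempty, closed and convex. A coercive strictly convex function attains its infimum at a unique point $\be^S$ on such a set.

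Next I would derive the active-set description. Writing the constraints as $\bone_S-\bz\le\bzero$, the KKT conditions furnish a multiplier vector $\mu\ge\bzero$ with $2\Sigma_S^{-1}\be^S=\mu$ and complementary slackness $\mu_i(\be^S_i-1)=0$ for each $i\in S$. Define the active set $I_S:=\{i\in S:\be^S_i=1\}$ and $J_S:=S\setminus I_S$. The set $I_S$ is nonempty, since $I_S=\emptyset$ would force $\Sigma_S^{-1}\be^S=\bzero$, i.e. $\be^S=\bzero$, contradicting $\be^S\ge\bone_S$. On $J_S$ the multipliers vanish, so $(\Sigma_S^{-1}\be^S)_{J_S}=\bzero$; combined with $\be^S_{I_S}=\bone_{I_S}$ and the invertibility of the principal block $[\Sigma_S^{-1}]_{J_SJ_S}$ of a positive-definite matrix, this block equation yields exactly the stated formula for $\be^S_{J_S}$, while primal feasibility is the requirement $\be^S_{J_S}\ge\bone_{J_S}$.

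The value formula then follows from a Schur-complement computation. Using that $2\Sigma_S^{-1}\be^S=\mu$ is supported on $I_S$ and $\be^S_{I_S}=\bone_{I_S}$, I get $\kappa_S=\be^{S\top}\Sigma_S^{-1}\be^S=\bone_{I_S}^\top(\Sigma_S^{-1}\be^S)_{I_S}$, and substituting $\be^S_{J_S}$ gives $(\Sigma_S^{-1}\be^S)_{I_S}=\big([\Sigma_S^{-1}]_{I_SI_S}-[\Sigma_S^{-1}]_{I_SJ_S}[\Sigma_S^{-1}]_{J_SJ_S}^{-1}[\Sigma_S^{-1}]_{J_SI_S}\big)\bone_{I_S}$. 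The bracketed matrix is the Schur complement of $[\Sigma_S^{-1}]_{J_SJ_S}$ in $\Sigma_S^{-1}$, which by the block-inverse identity equals $\big((\Sigma_S^{-1})^{-1}_{I_SI_S}\big)^{-1}=\Sigma_{I_S}^{-1}$. This produces $\kappa_S=\bone_{I_S}^\top\Sigma_{I_S}^{-1}\bone_{I_S}$ and, since $\Sigma_S^{-1}\be^S$ is supported on $I_S$ with value $\Sigma_{I_S}^{-1}\bone_{I_S}$ there, also the identity $\bz^\top\Sigma_S^{-1}\be^S=\bz_{I_S}^\top\Sigma_{I_S}^{-1}\bone_{I_S}$. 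To see $\kappa_S>1$ I would use the dual relation $\bone_{I_S}^\top\Sigma_{I_S}^{-1}\bone_{I_S}=1/\min_{\bone^\top\by=1}\by^\top\Sigma_{I_S}\by$; because $\Sigma_{I_S}$ is a correlation matrix, the weight $\by=(\tfrac12,\tfrac12,0,\ldots,0)$ gives $\by^\top\Sigma_{I_S}\by=(1+[\Sigma_{I_S}]_{12})/2<1$ whenever $|I_S|\ge2$, forcing $\kappa_S>1$; and $|I_S|=1$ is impossible, for then feasibility $\be^S_{J_S}\ge\bone_{J_S}$ would demand off-diagonal entries of a positive-definite correlation matrix of size at least $1$.

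The main obstacle is the genuinely combinatorial remainder: the \emph{uniqueness} of the index set $I_S$ and the \emph{strict} dual feasibility $h_i^S=e_i^\top\Sigma_{I_S}^{-1}\bone_{I_S}>0$ for every $i\in I_S$. Ordinary KKT only delivers $\mu\ge\bzero$, i.e. $h_i^S\ge0$, and does not by itself exclude weakly active constraints or single out one candidate active set. To close this I would argue, following \cite{hashorva:husler:2002}, that among all subsets $I\subseteq S$ the point computed as above is simultaneously primal and dual feasible for exactly one choice, and that strict complementarity holds at the optimum: if some $h_{i_1}^S=0$, one could transfer $i_1$ from $I_S$ into $J_S$ without altering $\be^S$, contradicting the characterization of $I_S$ as the active set of the unique minimizer. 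Checking that this exchange argument remains consistent with primal feasibility $\be^S_{J_S}\ge\bone_{J_S}$ at each step is the delicate point, and is precisely where the correlation structure (unit diagonal) is used essentially.
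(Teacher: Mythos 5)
Your proposed route---existence and uniqueness of $\be^S$ from strict convexity and coercivity, the KKT system $2\Sigma_S^{-1}\be^S=\mu\ge\bzero$ with complementary slackness, the Schur-complement identity $[\Sigma_S^{-1}]_{I_SI_S}-[\Sigma_S^{-1}]_{I_SJ_S}[\Sigma_S^{-1}]_{J_SJ_S}^{-1}[\Sigma_S^{-1}]_{J_SI_S}=\Sigma_{I_S}^{-1}$ yielding $\kappa_S=\bone_{I_S}^\top\Sigma_{I_S}^{-1}\bone_{I_S}$ and the support identity $\bz^\top\Sigma_S^{-1}\be^S=\bz_{I_S}^\top\Sigma_{I_S}^{-1}\bone_{I_S}$, the dual relation $\bone_{I}^\top\Sigma_{I}^{-1}\bone_{I}=1/\min_{\bone^\top\by=1}\by^\top\Sigma_{I}\by$ to force $\kappa_S>1$ when $|I_S|\ge2$, and the exclusion of $|I_S|=1$ because then $\be^S=\Sigma_S e_i$ would need off-diagonal correlations $\ge 1$---is sound and is essentially the argument behind the cited source; note that the paper itself gives no proof of this lemma, deferring entirely to \cite[Proposition 2.5 and Corollary 2.7]{hashorva:husler:2002}, so a self-contained KKT derivation like yours is a legitimate and natural substitute.

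The genuine gap is exactly where you flagged it, and your proposed fix does not work as stated. You define $I_S$ as the active set $\{i\in S:\be^S_i=1\}$ and then argue that $h_{i_1}^S=0$ is impossible because transferring $i_1$ into $J_S$ ``without altering $\be^S$'' would contradict ``the characterization of $I_S$ as the active set of the unique minimizer.'' There is no contradiction: the lemma never characterizes $I_S$ as the active set, and strict complementarity can genuinely fail. Concretely, if $\Sigma^{-1}\bone\ge\bzero$ with $(\Sigma^{-1}\bone)_k=0$ for some $k$ (this occurs, e.g., at the boundary value $\rho=1/(2\sqrt{2}-1)$ of the correlation matrix in \Cref{ex:gc3d}), then $\be^S=\bone$ is optimal, every constraint is active, yet the multiplier $2\Sigma^{-1}\bone$ vanishes at $k$; with your definition of $I_S$ the conclusion $h_k^S>0$ is simply false. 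The repair is to define $I_S$ as the \emph{support of the multiplier}, $I_S:=\{i\in S:(\Sigma_S^{-1}\be^S)_i>0\}$, which may be a proper subset of the active set. Weakly active indices then sit in $J_S$, where the formula $\be^S_{J_S}=-[\Sigma_S^{-1}]_{J_SJ_S}^{-1}[\Sigma_S^{-1}]_{J_SI_S}\bone_{I_S}$ reproduces their value $1$---consistent with the deliberately non-strict inequality $\be^S_{J_S}\ge\bone_{J_S}$ in the statement. With this definition, $h_i^S>0$ holds by construction via the Schur identity, $I_S\neq\emptyset$ since otherwise $\Sigma_S^{-1}\be^S=\bzero$, and uniqueness of $I_S$ is immediate: any index set $I$ satisfying the displayed formulas, primal feasibility, and $h_i>0$ for $i\in I$ produces a KKT point of a strictly convex program, hence the unique minimizer $\be^S$, whereupon $I=\{i:(\Sigma_S^{-1}\be^S)_i>0\}$ is forced. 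In short, your ``exchange move'' is not a contradiction to be derived but precisely the construction of the correct $I_S$; once $I_S$ is so defined, the rest of your proof goes through unchanged.
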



\begin{lemma} \label{lem:aux1}
Let $\Sigma\in\R^{d\times d}$ be a positive definite correlation matrix and $I:=I_{\mathbb{I}_d}$ be defined as in \Cref{lem:quadprog}.
\begin{enumerate}
    \item[(a)] Suppose $\Sigma^{-1}\bone>\bzero$. Then for any $S\subseteq \mathbb{I}_d$ with $S\not= \mathbb{I}_d$, the inequality
    $
        \kappa_{\mathbb{I}_d}>\kappa_{S}$ holds.
    \item[(b)] Suppose $\Sigma^{-1}\bone\ngtr\bzero$. Then $I\neq \mathbb{I}_d$ and for any set $  S\neq \mathbb{I}_d$ with $I\subseteq S\subseteq \mathbb{I}_d$ the 
    equality
        $\kappa_{\mathbb{I}_d}=\kappa_S$ holds.
    For $  S\subseteq  \mathbb{I}_d$ with $S^c\cap I\not=\emptyset $ we have  $I=I_S$ and the inequality
    $ \kappa_{\mathbb{I}_d}>\kappa_S$ holds.
\end{enumerate}
\end{lemma}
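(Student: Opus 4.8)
The plan is to collapse every program $\mathcal{P}_{\Sigma^{-1}_S}$ of \Cref{lem:quadprog} onto the common space $\R^d$ and then read off equality versus strict inequality directly from the Lagrange multipliers. The first step is to rewrite $\kappa_S$ as an unconstrained-on-$S^c$ minimization: partitioning $\Sigma^{-1}$ into the blocks indexed by $S$ and $S^c$, minimizing $\bw^\top\Sigma^{-1}\bw$ over $\bw_{S^c}$ for fixed $\bw_S$, and invoking the Schur-complement identity that the $(S,S)$-block of $\Sigma=(\Sigma^{-1})^{-1}$ is $\Sigma_S$, one obtains
\[
  \kappa_S=\min_{\{\bw\in\R^d:\,\bw_S\ge\bone_S\}}\bw^\top\Sigma^{-1}\bw .
\]
This reformulation has two payoffs: the objective is now the \emph{same} strictly convex function for every $S$, with only the feasible region changing, and enlarging the index set shrinks the feasible region. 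Hence $\kappa_{A'}\le\kappa_A$ whenever $A'\subseteq A$; in particular $\kappa_S\le\kappa_{\mathbb{I}_d}$ for all $S$, and the whole lemma reduces to deciding when this inequality is strict.

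Next I would extract the Lagrange data at the global minimizer $\be:=\be^{\mathbb{I}_d}$. Writing $g:=\Sigma^{-1}\be$ and using the relation $\bz^\top\Sigma^{-1}\be=\bz_I^\top\Sigma_I^{-1}\bone_I$ from \Cref{lem:quadprog} with $\bz=e_i$, one gets $g_i=h_i^{\mathbb{I}_d}>0$ for $i\in I$ and $g_i=0$ for $i\in J:=\mathbb{I}_d\setminus I$; these are (half) the Karush--Kuhn--Tucker multipliers, so $I$ is exactly the set of indices carrying a strictly positive multiplier. In particular $\Sigma^{-1}\bone>\bzero$ is equivalent to $\be=\bone$ and $I=\mathbb{I}_d$, which already settles the first assertion of (b): if $\Sigma^{-1}\bone\ngtr\bzero$, then $I\neq\mathbb{I}_d$.

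The crux is the equivalence $\kappa_S=\kappa_{\mathbb{I}_d}\iff S^c\cap I=\emptyset$. For the ``if'' direction, when $I\subseteq S$ I would verify that $\be$ satisfies the KKT conditions of the $S$-reformulated program: feasibility is clear, stationarity on the free coordinates $S^c\subseteq J$ holds because $g$ vanishes there, and complementary slackness holds because $g_i>0$ only where $\be_i=1$. By convexity $\be$ is then the $S$-minimizer, giving $\kappa_S=\kappa_{\mathbb{I}_d}$ and, by comparing active coordinates, $I_S=I$. Conversely, if $S^c\cap I\neq\emptyset$ while $\kappa_S=\kappa_{\mathbb{I}_d}$, then $\be$ (feasible for the $S$-program and attaining $\kappa_{\mathbb{I}_d}=\kappa_S$) would be the unique $S$-minimizer, forcing stationarity $g_j=0$ at some now-free $j\in S^c\cap I$, contradicting $g_j=h_j^{\mathbb{I}_d}>0$; hence $\kappa_S<\kappa_{\mathbb{I}_d}$. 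Part (a) follows because $I=\mathbb{I}_d$ makes $S^c\cap I=S^c\neq\emptyset$ for every $S\neq\mathbb{I}_d$, and part (b) follows by applying the equivalence in the two regimes $I\subseteq S\subsetneq\mathbb{I}_d$ and $S^c\cap I\neq\emptyset$.

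The step I expect to be most delicate is this last sensitivity argument: converting ``a relaxed constraint carried a strictly positive multiplier'' into a \emph{strict} decrease of the optimal value. It rests on uniqueness of the minimizer (strict convexity of $\bw\mapsto\bw^\top\Sigma^{-1}\bw$, since $\Sigma^{-1}$ is positive definite) together with the explicit multiplier values $h_i^{\mathbb{I}_d}>0$ supplied by \Cref{lem:quadprog}. Once the Schur-complement reformulation and the identification of $I$ with the positive-multiplier set are in place, everything else is bookkeeping.
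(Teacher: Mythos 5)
Your proof is correct, and it takes a genuinely different route to the same destination: both arguments pivot on the Schur-complement identity $[\Sigma^{-1}]_{S}=\Sigma_{S}^{-1}+[\Sigma^{-1}]_{SS^c}[\Sigma^{-1}]_{S^c}^{-1}[\Sigma^{-1}]_{S^cS}$ and on the sign pattern of $\Sigma^{-1}\be^{*}$ (positive on $I$, zero off $I$), but the paper never forms your lifted program over $\R^d$. Instead it evaluates the induced decomposition at the global minimizer $\be^{*}:=\be^{\mathbb{I}_d}$, obtaining $\kappa_{\mathbb{I}_d}=\be_S^{*\top}\Sigma_S^{-1}\be_S^{*}+\widetilde{\be}_S^{\top}[\Sigma^{-1}]_{S^c}\widetilde{\be}_S$ with $\widetilde{\be}_S=[\Sigma^{-1}]_{S^c}^{-1}\left[\Sigma^{-1}\be^{*}\right]_{S^c}$: when $S^c\cap I\neq\emptyset$ the second term is strictly positive (this is exactly your "positive multiplier on a relaxed constraint" fact, realized as a quadratic form rather than via KKT necessity), which gives $\kappa_{\mathbb{I}_d}>\be_S^{*\top}\Sigma_S^{-1}\be_S^{*}\ge\kappa_S$; the equality case in (b) is then handled not by verifying KKT sufficiency at $\be^{*}$, as you do, but by a sandwich: the same decomposition yields the monotonicity $\kappa_{\mathbb{I}_d}\ge\kappa_S\ge\kappa_I$, while $\kappa_{\mathbb{I}_d}=\bone_I^{\top}\Sigma_I^{-1}\bone_I=\kappa_I$ follows from \Cref{lem:quadprog} and $\Sigma_I^{-1}\bone_I>\bzero_I$. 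What your packaging buys: the clean two-sided equivalence $\kappa_S=\kappa_{\mathbb{I}_d}\Leftrightarrow I\subseteq S$, and an explicit derivation of $I_S=I$ in the case $I\subseteq S$ --- a fact the paper uses later (in the proof of \Cref{lem:premaiforGausscop}, with the pointer ``cf.\ proof of \Cref{lem:aux1}'') but does not actually spell out; note that the lemma as printed misplaces ``$I=I_S$'' into the case $S^c\cap I\neq\emptyset$, where it is impossible since $I_S\subseteq S$, so you have in effect proved the intended, corrected statement. What the paper's route buys is economy of means: pure block linear algebra, with no appeal to constraint qualifications or to KKT necessity at a non-explicit minimizer; it also gets $I\neq\mathbb{I}_d$ in (b) by citing \cite{hashorva:husler:2002} rather than re-deriving it from the multiplier identities as you do (your derivation is fine). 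One wording caution: ``by comparing active coordinates, $I_S=I$'' should read ``positive-multiplier coordinates'' --- in degenerate cases the active set can strictly contain $I_S$, and it is the uniqueness of $I_S$ asserted in \Cref{lem:quadprog}, together with your identification of the reduced program's multipliers with $g_S$ via the Schur identity, that pins $I_S$ down.
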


\begin{proof}
We start with some preliminary calculations. Suppose $ S\subseteq \mathbb{I}_d$ with $S^c\cap I\not=\emptyset $. Let $\be^*:=\be^{\mathbb{I}_d}$ be the unique solution of the quadratic programming problem $\mathcal{P}_{\Sigma^{-1}}$ such that  $\kappa_{\mathbb{I}_d}=\be^{*\top }\Sigma^{-1}\be^*$, $\be^*\geq\bone $
  and $[\Sigma^{-1}\be^*]_{S^c}\not=\bzero_{S^c}$  since $[\Sigma^{-1}\be^*]_{I}>\bzero_{I}$
  and $S^c\cap I\not=\emptyset $ (cf. \Cref{lem:quadprog}).
  First, define $\widetilde{\be}_S:=\be_{S^c}^{*}+[\Sigma^{-1}]_{S^c}^{-1}[\Sigma^{-1}]_{S^cS}\be_S^{*}$ and note that
  \begin{eqnarray} \label{eq1.1}
        \widetilde{\be}_S&=&\be_{S^c}^{*}+[\Sigma^{-1}]_{S^c}^{-1}[\Sigma^{-1}]_{S^cS}\be_S^{*} \nonumber \\
        &=&[\Sigma^{-1}]_{S^c}^{-1}\left([\Sigma^{-1}]_{S^c}\be_{S^c}^{*}
        +[\Sigma^{-1}]_{S^cS}e_S^{*}\right) \nonumber\\
        &=&[\Sigma^{-1}]_{S^c}^{-1}\left[\Sigma^{-1}\be^{*}\right]_{S^c}\not=\bzero_{S^c}.
  \end{eqnarray}
  Finally, the Schur decomposition (see \citet[eq. (B2)]{Lauritzen})
  \begin{eqnarray*}
        [\Sigma^{-1}]_{S}=\Sigma_{S}^{-1}+[\Sigma^{-1}]_{SS^c}[\Sigma^{-1}]_{S^c}^{-1}[\Sigma^{-1}]_{S^cS}
  \end{eqnarray*}
  along with \eqref{eq1.1} imply that
   \begin{eqnarray}
        \kappa_{\mathbb{I}_d}
        &=& \be^{*\top}\Sigma^{-1}\be^* \nonumber\\
             &=& \be_S^{*\top}\Sigma_S^{-1}\be^*_S  + \widetilde{\be}_S^{\top}[\Sigma^{-1}]_{S^c}\widetilde{\be}_S \label{Schur}\\
            &> & \be_S^{*\top}\Sigma^{-1}\be^*_S\geq \min_{\bz_S\geq \bone_S}
            \bz_S^{\top}\Sigma^{-1}_S\bz_S=\kappa_S. \label{eq2}
  \end{eqnarray}

\noindent (a)  If $\Sigma^{-1}\bone>\bzero$ then $I=\mathbb{I}_d$
    and $\be^*=\bone$; see \citet[Proposition 2.5]{hashorva:husler:2002}.
    Thus, any $S\subseteq\mathbb{I}_d$ with $S\neq \mathbb{I}_d$  satisfies   $S^c\cap I\not=\emptyset $ and the result follows from \eqref{eq2}.\\
\noindent (b) If $\Sigma^{-1}\bone\ngtr\bzero$ then $I\subseteq\mathbb{I}_d$  and $I\neq \mathbb{I}_d$; see \citet[Proposition 2.5]{hashorva:husler:2002}. Hence,
    \Cref{lem:quadprog} and $\Sigma_I^{-1}\bone_I>\bzero_I$ imply that
    \begin{eqnarray*}
        \kappa_{\mathbb{I}_d}=\bone_I^{\top}\Sigma_{I}^{-1}\bone_I^{\top}=\kappa_I.
    \end{eqnarray*}
    Further, we already know from the Schur decomposition \eqref{Schur}, which is valid independent of the choice of the set $S$, that $\kappa_{\mathbb{I}_d}\geq\kappa_S\geq \kappa_I$.
    Hence the only possibility is $\kappa_{\mathbb{I}_d}=\kappa_S=\kappa_I$. The second statement was already proven in \eqref{eq2}.
\end{proof}

The next proposition provides the tail asymptotics for the Gaussian survival copula using \citet[Theorem 1]{das:fasen:2023a}.
\begin{proposition}\label{prop:gcsurvcop}
    Let $C^{\Sigma}$ be a  Gaussian copula with positive definite correlation matrix $\Sigma$ and $S\subset \mathbb{I}_d$ with $|S|\ge 2$. Let $\kappa_S$, $I_S$, 
and  $h_{s}^S, s\in I_S$, be defined as in  \Cref{lem:quadprog}. Now, with $\bv_S=(v_s)_{s\in S}$ where $v_s\in (0,1), \forall s\in S$, we have as $u\downarrow 0$,
    \begin{align}
         \widehat{C}^{\Sigma}_S(u\bv_S) 
        &= (1+o(1))\Upsilon_S({2\pi})^{\frac{\kappa_S}2}u^{\kappa_S} (-2\log u)^{\frac{\kappa_S-|I_S|}{2}} \prod_{s\in I_S} v_s^{h_s^S}  
        \label{eq:limitgcsurvival} 
    \end{align}
        where $\Upsilon_S>0$ is a constant.
\end{proposition}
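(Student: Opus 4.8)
The plan is to realize the survival copula as a joint upper-tail probability of a Gaussian-copula model with Pareto margins and then to invoke the tail asymptotics established in \cite{das:fasen:2023a}. Concretely, let $\bY=(Y_s)_{s\in S}$ be a random vector whose dependence is the $|S|$-dimensional Gaussian copula $C_S^{\Sigma}$ (equivalently, the Gaussian copula with correlation matrix $\Sigma_S$) and whose margins are identical standard Pareto, $\P(Y_s>y)=1/y$ for $y\ge 1$. By the defining relation of the survival copula together with this choice of margins, for $v_s\in(0,1)$ and small $u>0$,
\begin{align*}
\widehat{C}^{\Sigma}_S(u\bv_S)=\P\Big(Y_s>\tfrac{1}{uv_s}:s\in S\Big).
\end{align*}
Hence the behaviour of $\widehat{C}^{\Sigma}_S(u\bv_S)$ as $u\downarrow 0$ is precisely the joint-tail behaviour of $\bY$ along the ray $y_s=(1/u)(1/v_s)$, $s\in S$, as the common scale $1/u\to\infty$.

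Next I would apply \cite[Theorem 1]{das:fasen:2023a}, which supplies the precise asymptotics of $\P(Y_s>ta_s:s\in S)$ as $t\to\infty$ for fixed directions $a_s>0$. The governing quantities are exactly those isolated in \Cref{lem:quadprog}: the optimal value $\kappa_S$ of the quadratic program $\mathcal{P}_{\Sigma^{-1}_S}$, the active index set $I_S$, and the weights $h_s^S=e_s^\top\Sigma_{I_S}^{-1}\bone_{I_S}$. That formula has the shape of a constant $\Upsilon_S(2\pi)^{\kappa_S/2}$, a power $\prod_{s\in I_S}(ta_s)^{-h_s^S}$ coming from the regularly varying margins restricted to the active coordinates, and a logarithmic correction $(2\log t)^{(\kappa_S-|I_S|)/2}$ arising from the Gaussian Mills-ratio factors and the relation $\Phi^{-1}(1-w)\sim\sqrt{-2\log w}$ as $w\downarrow0$. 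It then remains to substitute $t=1/u$ and $a_s=1/v_s$ and to simplify. For the power term, \Cref{lem:quadprog} gives $\sum_{s\in I_S}h_s^S=\bone_{I_S}^\top\Sigma_{I_S}^{-1}\bone_{I_S}=\kappa_S$, whence
\begin{align*}
\prod_{s\in I_S}(ta_s)^{-h_s^S}=\prod_{s\in I_S}(uv_s)^{h_s^S}=u^{\kappa_S}\prod_{s\in I_S}v_s^{h_s^S},
\end{align*}
producing both the claimed factor $u^{\kappa_S}$ and the direction-dependence $\prod_{s\in I_S}v_s^{h_s^S}$. For the logarithmic term, $2\log t=-2\log u$ yields $(-2\log u)^{(\kappa_S-|I_S|)/2}$; here I would observe that the individual coordinate thresholds carry logs $\log(1/(uv_s))=-\log u-\log v_s$, but since $-\log u\to\infty$ dominates the constants $-\log v_s$, these all collapse to $-\log u$ up to a factor $1+o(1)$, so the $\bv_S$-dependence survives only through the power and not the logarithm. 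Collecting the pieces reproduces \eqref{eq:limitgcsurvival}, and in particular the slowly varying function promised in \Cref{prop:Gausscoptailorder} is $(-2\log u)^{(\kappa_S-|I_S|)/2}$ up to multiplicative constants.

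The main technical point is not the algebra of this substitution but the bookkeeping inside \cite[Theorem 1]{das:fasen:2023a}: one must verify that the constant $\Upsilon_S$, the exponent of the logarithm, and the Mills-ratio normalization are transported correctly when the Gaussian tail event is re-expressed through the Pareto margins, and in particular that replacing a homogeneous Gaussian threshold by $\Phi^{-1}(1-uv_s)$ does not alter the leading order. I expect the cleanest execution to cite that theorem on the submatrix $\Sigma_S$ and index set $I_S$ verbatim, so that the only genuine work is the change of variables $y_s=1/(uv_s)$ together with the identity $\sum_{s\in I_S}h_s^S=\kappa_S$ from \Cref{lem:quadprog}.
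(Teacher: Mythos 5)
Your proposal is correct and takes essentially the same route as the paper's proof: instantiate a random vector with standard Pareto margins and Gaussian copula $C^{\Sigma}$, identify $\widehat{C}^{\Sigma}_S(u\bv_S)$ with the joint tail probability $\P(Z_s>u^{-1}v_s^{-1},\,\forall s\in S)$, and invoke \cite[Theorem 1]{das:fasen:2023a} with $t=1/u$ and $z_s=1/v_s$. The only cosmetic difference is that you recover $u^{\kappa_S}$ through the identity $\sum_{s\in I_S}h_s^S=\kappa_S$ from \Cref{lem:quadprog}, whereas the cited asymptotic already separates the factor $t^{-\kappa_S}$ from the direction-dependent product, so no extra bookkeeping is needed.
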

    \begin{proof}
      Since \eqref{eq:limitgcsurvival} is independent of the marginals of the  distribution, consider a random vector $\bZ\sim G$ in $\R^d$ with standard Pareto
marginals, i.e.,  $G_j(z)=\P(Z_j\le z)=1-z^{-1}, z\ge1$, $\forall j\in \mathbb{I}_d$, and dependence given by the Gaussian copula $C^{\Sigma}$. Using \citet[Theorem 1]{das:fasen:2023a} we have that for $\bz_S=(z_s)_{s\in S}$ with $z_s>0$ $\forall s\in S$, as $t\to\infty$,
 \begin{align} \label{eq:limitpgcwithtx}
        \P(Z_s > tz_s,\,\forall\, s\in S)
        &= (1+o(1))\Upsilon_S({2\pi})^{\frac{\kappa_S}2}t^{-\kappa_S} (2\log (t))^{\frac{\kappa_S-|I_S|}{2}} \prod_{s\in I_S} z_s^{-h_s^S}  
    \end{align}
    where $\Upsilon_S>0$ is a constant. Then 
\begin{align*}
    \widehat{C}^{\Sigma}_S(u\bv_S) & = \P(G_s(Z_s)>1-uv_s,\,\forall\, s\in S)\\
    & = \P(Z_s>u^{-1}v_s^{-1},\,\forall\, s\in S)   
\end{align*}
 and the result follows immediately from \eqref{eq:limitpgcwithtx}.  
    \end{proof}

\begin{lemma}\label{lem:premaiforGausscop}
      Let $C^{\Sigma}$ be a Gaussian copula with positive definite correlation matrix $\Sigma$. Then there exists a $\ell\in \mathbb{I}_d$  such that
      \begin{align}\label{eqa2}
          \lim_{u\downarrow 0} \frac{\widehat{C}^\Sigma(u,\ldots,u)}{\widehat{C}^\Sigma_{\mathbb{I}_d\setminus{\{\ell\}}}(u,\ldots,u)}
          = c\in \left(0,1\right]
      \end{align}
     if and only if $\Sigma^{-1}\bone \ngeq\bzero$.
\end{lemma}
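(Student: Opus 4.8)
The plan is to extract both the numerator and the denominator from the exact tail asymptotics of \Cref{prop:gcsurvcop} (taken at $\bv_S=\bone_S$) and to compare the two expansions monomial by monomial. Abbreviating $I:=I_{\mathbb{I}_d}$ and, for a fixed candidate index $\ell$, $S:=\mathbb{I}_d\setminus\{\ell\}$, \Cref{prop:gcsurvcop} yields
\begin{align*}
\frac{\widehat{C}^{\Sigma}(u,\ldots,u)}{\widehat{C}^{\Sigma}_{S}(u,\ldots,u)}
&=(1+o(1))\,\frac{\Upsilon_{\mathbb{I}_d}}{\Upsilon_S}\,(2\pi)^{\frac{\kappa_{\mathbb{I}_d}-\kappa_S}{2}}\,u^{\,\kappa_{\mathbb{I}_d}-\kappa_S}\\
&\quad\times(-2\log u)^{\frac{(\kappa_{\mathbb{I}_d}-|I|)-(\kappa_S-|I_S|)}{2}},\qquad u\downarrow0.
\end{align*}
By \eqref{Schur} one always has $\kappa_{\mathbb{I}_d}\ge\kappa_S$, and by the monotonicity $\widehat{C}^{\Sigma}(u,\ldots,u)\le\widehat{C}^{\Sigma}_S(u,\ldots,u)$ the ratio never exceeds $1$; hence the limit is either $0$ or a strictly positive constant, which in the latter case automatically lies in $(0,1]$. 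It is positive precisely when the powers of $u$ and of $-2\log u$ both vanish, i.e. when $\kappa_{\mathbb{I}_d}=\kappa_S$ and $|I|=|I_S|$, and then equals $\Upsilon_{\mathbb{I}_d}/\Upsilon_S$. The statement thus reduces to deciding for which $\Sigma$ a single deletion $\ell$ can enforce both equalities.

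These equalities I would resolve through the dichotomy of \Cref{lem:aux1}. If $\Sigma^{-1}\bone>\bzero$, then $I=\mathbb{I}_d$ and part~(a) gives $\kappa_{\mathbb{I}_d}>\kappa_S$ for every proper $S$; the power of $u$ is then strictly positive, so the limit is $0$ for every $\ell$, which settles the direction ``positive limit $\Rightarrow\Sigma^{-1}\bone\ngeq\bzero$'' in its main case. For the converse, assume $\Sigma^{-1}\bone$ is not positive, so that $I\neq\mathbb{I}_d$ by part~(b), and pick $\ell\in\mathbb{I}_d\setminus I$. Part~(b) already gives $\kappa_{\mathbb{I}_d}=\kappa_S$; to obtain $|I_S|=|I|$ I would use that $\ell$ is an inactive index, i.e. $[\Sigma^{-1}\be^{*}]_\ell=0$, so that $\widetilde{\be}_S=\bzero$ in \eqref{eq1.1} and \eqref{Schur} collapses to $\kappa_{\mathbb{I}_d}=\be^{*\top}_S\Sigma_S^{-1}\be^{*}_S$. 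By the uniqueness in \Cref{lem:quadprog} this forces $\be^S=\be^{*}_S$, whose active coordinates are exactly $I\subseteq S$; hence $I_S=I$ and the limit is a positive constant.

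I expect the main obstacle to be the bookkeeping of the active index sets across a deletion: one must show rigorously that erasing an index $\ell$ with $(\Sigma^{-1}\bone)_\ell=0$ leaves both $\kappa$ and the active set $I$ intact. The clean tool is the Schur-complement identity $\Sigma_S^{-1}\bone_S=(\Sigma^{-1}\bone)_S$, valid precisely when $(\Sigma^{-1}\bone)_\ell=0$, from which $I_S=I$ follows at once. This same identity exposes the truly delicate issue, and the reason the threshold must be read carefully: the borderline configurations in which $\Sigma^{-1}\bone\ge\bzero$ carries a vanishing coordinate but no negative one. Whether such a deletion produces a positive limit is exactly what separates $\ngeq$ from $\ngtr$, and settling it amounts to checking that \Cref{prop:gcsurvcop} delivers the stated $\log$-power at this degenerate vertex. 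I would resolve this borderline regime directly from \Cref{lem:quadprog} together with the identity above, and it is here that I expect the real work of the proof to lie.
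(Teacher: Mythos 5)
Your proposal follows essentially the same route as the paper's proof: the exact ratio asymptotics from \Cref{prop:gcsurvcop}, the dichotomy of \Cref{lem:aux1} (part (a) giving $\kappa_{\mathbb{I}_d}>\kappa_S$ for every proper $S$ when $\Sigma^{-1}\bone>\bzero$, hence limit $0$ for every $\ell$; part (b) giving $\kappa_{\mathbb{I}_d}=\kappa_S$ for $S=\mathbb{I}_d\setminus\{\ell\}$ with $\ell\notin I$, hence limit $\Upsilon_{\mathbb{I}_d}/\Upsilon_S>0$), and your identification of $I_S=I$ via $[\Sigma^{-1}\be^*]_\ell=0$, the collapse of \eqref{Schur} through \eqref{eq1.1}, and uniqueness in \Cref{lem:quadprog} is exactly the content the paper compresses into ``cf.\ proof of \Cref{lem:aux1}''. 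Your additional bookkeeping (ratio bounded by $1$ by monotonicity, positivity of the limit forcing both the $u$-power and the $\log$-power to vanish) is sound.

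The borderline regime you flag as ``the real work'' and then defer is, however, a genuine loose end --- and you should know that the paper's own proof does not treat it either: its reductio in the forward direction assumes $\Sigma^{-1}\bone>\bzero$, whereas the negation of the stated equivalent $\Sigma^{-1}\bone\ngeq\bzero$ is only $\Sigma^{-1}\bone\geq\bzero$, which admits vanishing coordinates (e.g.\ $\rho=1/(2\sqrt{2}-1)$ in \Cref{ex:gc3d}). Your own Schur identity settles this case, and in a direction that corrects the statement rather than confirms it: if $(\Sigma^{-1}\bone)_\ell=0$ with the remaining coordinates positive, then $\Sigma_S^{-1}\bone_S=(\Sigma^{-1}\bone)_S>\bzero_S$ for $S=\mathbb{I}_d\setminus\{\ell\}$, the minimizer of $\mathcal{P}_{\Sigma^{-1}}$ is $\bone$ with index set $I=S=I_S$, and $\kappa_{\mathbb{I}_d}=\bone^\top\Sigma^{-1}\bone=\bone_S^\top\Sigma_S^{-1}\bone_S=\kappa_S$, so the ratio tends to $\Upsilon_{\mathbb{I}_d}/\Upsilon_S\in(0,1]$ and \eqref{eqa2} \emph{does} hold; equivalently, note that \Cref{lem:aux1}(b) only requires $\Sigma^{-1}\bone\ngtr\bzero$, so the paper's $\Leftarrow$ argument already goes through in this regime. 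The equivalence in the lemma is therefore properly with $\Sigma^{-1}\bone\ngtr\bzero$, which is also what \Cref{thm:maiforGausscop} with its strict condition $\Sigma_S^{-1}\bone_{|S|}>\bzero_{|S|}$ actually needs. So your argument is at parity with the published proof and your closing observation pins down an imprecision in the statement; to be complete, carry out the borderline computation you sketched instead of leaving it as an expectation.
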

\begin{proof} 
   $\Leftarrow$: \, Suppose $\Sigma^{-1}\bone \ngeq\bzero$. From \Cref{lem:aux1}(b) we already know that $I \not=\mathbb{I}_d$. Now  let $\ell\in\mathbb{I}_d\backslash I$.  For $S=\mathbb{I}_d\backslash\{\ell\}$ we have $I\subseteq S\subseteq \mathbb{I}_d$,  with $I=I_S$ and  $\kappa_{\mathbb{I}_d}=\kappa_S$ (cf. proof of \Cref{lem:aux1}). 
Now using \eqref{eq:limitgcsurvival} we have
\begin{align*}
     \lim_{u\downarrow 0} \frac{\widehat{C}^\Sigma(u,\ldots,u)}{\widehat{C}^\Sigma_{\mathbb{I}_d\setminus{\{\ell\}}}(u,\ldots,u)} 
    & = \frac{{\Upsilon}_{\mathbb{I}_d}}{\Upsilon_{\mathbb{I}_d\setminus\{\ell\}}} >0.
\end{align*}

$\Rightarrow$: \, Suppose there exists $\ell\in \mathbb{I}_d$ such that \eqref{eqa2} holds. We prove the statement by contradiction. By way of contradiction, assume
$\Sigma^{-1}\bone >\bzero$ holds. \Cref{lem:aux1} says that  for any set $S\subseteq \mathbb{I}_d$ with  $S\not= \mathbb{I}_d$ the inequality $\kappa_{\mathbb{I}_d}>\kappa_S$ holds. 
Again using \eqref{eq:limitgcsurvival} we have  with $\kappa^*:=\kappa_{\mathbb{I}_d}-\kappa_{\mathbb{I}_d\setminus\{\ell\}}$ and $d^*:=d-|I_{\mathbb{I}_d\setminus\{\ell\}}|$,
\begin{align*}
     \lim_{u\downarrow 0} \frac{\widehat{C}^\Sigma(u,\ldots,u)}{\widehat{C}^\Sigma_{\mathbb{I}_d\setminus{\{\ell\}}}(u,\ldots,u)} 
      & =  \lim_{u\downarrow 0} \frac{\Upsilon_{\mathbb{I}_d}}{\Upsilon_{\mathbb{I}_d\setminus\{\ell\}}} (\sqrt{2\pi}u)^{\kappa^*} 
      (-2\log u)^{\frac{\kappa^*-d^*}{2}} = 0
\end{align*}
which is a contradiction to \eqref{eqa2}.
\end{proof}

\begin{proof}[Proof of \Cref{thm:maiforGausscop}]
    The proof follows now from \Cref{lem:premaiforGausscop} by using an analogous argument as given in the proof of \Cref{prop:gcsurvcop}.
\end{proof}

\begin{proof}[Proof of \Cref{prop:Gausscoptailorder}]
    The proof directly follows from \Cref{prop:gcsurvcop} where a representation for $\ell_S$ is also provided.
\end{proof}

\section{Proofs of Section \ref{sec:kwiseasyind}} \label{Appendix B}

\begin{proof}[Proof of \Cref{Theorem 4.4}]
First, we define for some $\rho\in(-\frac{1}{\sqrt{k}},\frac{1}{\sqrt{k}})$  the $\R^{(k+1)\times (k+1)}$-valued  positive definite matrix 
\begin{eqnarray*}
	\Gamma_{\rho}:=\begin{bmatrix}
		\II_k\;\;\; & \rho \bone_k\\[1em]
           \rho \bone_k^{\top} & 1
\end{bmatrix}
\end{eqnarray*}
with inverse
\begin{eqnarray}
 	\Gamma_{\rho}^{-1}=\begin{bmatrix}
		\II_k+\frac{\rho^2}{1-k\rho^2}\bone_k\bone_k^{\top}\;\;\;\;\;  & \frac{-\rho}{1-k\rho^2} \bone_k\\[1em]
           \frac{-\rho}{1-k\rho^2} \bone_k^{\top} & \frac{1}{1-k\rho^2}
\end{bmatrix}.
\end{eqnarray}
Note that

\begin{eqnarray*}
	\Gamma_{\rho}^{-1}\bone_{k+1}
		=\left[\frac{1-\rho}{1-k\rho^2},\ldots,\frac{1-\rho}{1-k\rho^2},\frac{1-k\rho}{1-k\rho^2}\right]^\top.
\end{eqnarray*}
If we restrict $\rho\in[\frac{1}{k},\frac{1}{\sqrt{k}})$ then the first $k$ components of  
$\Gamma_{\rho}^{-1}\bone_{k+1}$ are positive and the last component is negative resulting in $\Gamma_{\rho}^{-1}\bone_{k+1} \ngtr \bzero_{k+1}$, and hence, due to \Cref{thm:maiforGausscop}, a Gaussian copula $C^{\Gamma_{\rho}}$ with correlation matrix $\Gamma_{\rho}$ is not mutually asymptotically independent and thus, not $(k+1)$-wise asymptotically independent.

 Now suppose that $\bX\in\R^{(k+1)\times(k+1)}$ is a random vector with Gaussian copula $C^{\Gamma_{\rho}}$ where $\rho$ is further restricted to $\rho\in(\frac{1}{k},\min(\frac{1}{k-1},\frac{1}{\sqrt{k}}))$. Consider a subset $S\subset\{1,\ldots,k+1\}$  with $\vert S\vert=j$ such that $j \in \{2,\ldots, k\}$.
 \begin{itemize}
     \item If $k+1\in S$, considering $k+1$ to be the final element of $S$, we have
     \begin{eqnarray*}
	[\Gamma_{\rho}]_S^{-1}\bone_{j}
		=\left[\frac{1-\rho}{1-(j-1)\rho^2},\ldots,\frac{1-\rho}{1-(j-1)\rho^2},\frac{1-(j-1)\rho}{1-(j-1)\rho^2}\right]^\top>\bzero_j.
\end{eqnarray*}
\item If $k+1\notin S$, then $[\Gamma_{\rho}]_S={\II}_j$ and hence 
\[[\Gamma_{\rho}]_S^{-1}\bone_{j} = \bone_{j}>\bzero_j. \]
 \end{itemize}
Thus \Cref{thm:maiforGausscop} implies then that $\bX_J$, for any $J \subseteq \{1, \ldots, k+1\}$ with $\vert J \vert \leq k$, is a mutually asymptotically independent random vector in $\R^J$. Finally, a conclusion of \Cref{Remark 4.3} is that $\bX$ is $k$-wise asymptotically independent in $\R^{(k+1)\times(k+1)}$, although it is not $(k+1)$-wise asymptotically independent. From 
\Cref{lem:quadprog} we know that $I_{\{1,\ldots,k+1\}}=\{1,\ldots,k\}=I_{\{1,\ldots,k\}}$, $\kappa_{\{1,\ldots,k+1\}}=\kappa_{\{1,\ldots,k\}}=k$, \revision{$h_i^{\{1,\ldots,k+1\}}=h_i^{\{1,\ldots,k\}}=1$ } for $i\in \{1,\ldots,k\}$ and finally,
from \Cref{prop:gcsurvcop} that
 \begin{eqnarray*}
        \lim_{u\downarrow 0}\frac{\widehat C^{\Gamma_{\rho}}_{\{1,\ldots,k+1\}}(u,\ldots,xu)}{\widehat C^{\Gamma_{\rho}}_{\{1,\ldots,k\}}(u,\ldots,u)}=1.
  \end{eqnarray*}
  Note that the constant $\Upsilon_S$ in \Cref{prop:gcsurvcop} is not specified in this paper, but it is given in \citet[Theorem 1]{das:fasen:2023a}, from which we obtain $\Upsilon_{\{1,\ldots,k+1\}} = \Upsilon_{\{1,\ldots,k\}}$.

After all, define the $(d\times d)$-dimensional correlation $\Sigma_{\rho}$ as a block diagonal matrix having in the first $(d-(k+1))\times (d-(k+1))$ block the identity matrix, {zeros in the two off-diagonal blocks,} and, in the last $(k+1)\times(k+1)$ block $\Gamma_{\rho}$ with $\rho\in(\frac{1}{k},\min(\frac{1}{k-1},\frac{1}{\sqrt{k}}))$, i.e., the random vector $\bZ^*=(Z_1^*,\ldots,Z_d^*)$ with Gaussian copula $C^{\Sigma_{\rho}}$ has the property that $Z_1^*,\ldots,Z_{d-(k+1)}^*$ are an independent sequence which is as well independent of the random vector $\bX=(Z_{d-k}^*,\ldots, Z_d^*)$ in $\R^{(k+1)\times(k+1)}$ with Gaussian copula $C^{\Gamma_{\rho}}$. Then by analogous arguments as above,  $\bZ^*$ is a $k$-wise asymptotically independent random vector in $\R^d$ although it is not $(k+1)$-wise asymptotically independent and
\begin{eqnarray*}
        \lim_{u\downarrow 0}\frac{\widehat C^{\Sigma_{\rho}}_{\{d-k,\ldots,d\}}(u,\ldots,xu)}{\widehat C^{\Sigma_{\rho}}_{\{d-k,\ldots,d-1\}}(u,\ldots,u)}=1.
  \end{eqnarray*}
The $d$-dimensional random vector $\bZ$ is finally a permutation of $\bZ^*$ with $
\bZ_{S_2}=\bZ^*_{\{d-k,\ldots,d\}}$, $
\bZ_{S_1}=\bZ^*_{\{d-k,\ldots,d-1\}}$ and $
\bZ_{\mathbb{I}_{d}\backslash S_2}=\bZ^*_{\{1,\ldots,d-k-1\}}$ and satisfies the requirements of the theorem.
\end{proof}

\section{Proofs of Section \ref{sec:dro}} \label{Appendix C}

\begin{proof}[Proof of \Cref{Theorem 5.1}]
    For ease of notation, we define $J^*:=J\cup\{1\}$. By definition,
    \begin{align}
        \CTP_{\gamma}(\bZ_{1|J}) & = \P\left(Z_1>\VaR_{\gamma}(Z_1) | Z_j>\VaR_{\gamma}(Z_j),\, \forall j\in J\right) \nonumber\\ & = \P (Z_1>{F}_1^{-1}(1-\gamma)| Z_j>{F}_j^{-1}(1-\gamma) ,\, \forall j\in J) \nonumber \\
        & = \frac{\widehat{C}_{J^*}(\gamma, \ldots, \gamma)}{\widehat{C}_{J}(\gamma, \ldots, \gamma)}, \label{eq:CTPlim}
    \end{align}
    which does not depend on the marginal distributions.
    \begin{enumerate}[(a)]
    \item Since $\mathcal{N}_k\subseteq \mathcal{P}_k$, and probabilities are non-negative, it is sufficient to show the statement for $\mathcal{P}_k$. But for any $F\in\mathcal{P}_k$, by definition of $k$-wise asymptotic independence and $\vert J^*\vert=\ell+1\leq k$ we have 
   $\lim_{\gamma\downarrow 0} \CTP_{\gamma}(\bZ_{1|J}) =0$,
   and thus (a) holds.
   
    \item If $d=2$, there is nothing else to prove.    
    Hence, now assume $d\ge 3$. Since $0\le \CTP_{\gamma}(\bZ_{1| J}) \le 1$, to show (b), it is sufficient to provide an example of $F\in \mathcal{N}_\ell\subseteq  \mathcal{N}_k\subseteq \mathcal{P}_k$ for $k\in\{2,\ldots,\ell\}$, such that for $\bZ\sim F$, we have 
   $\lim_{\gamma\downarrow 0} \CTP_{\gamma}(\bZ_{1|J}) =1$ . 
   To this end, we will choose $F$ with a Gaussian copula
   $C^{\Sigma}$ and positive-definite correlation matrix $\Sigma$ as identified in \Cref{Theorem 4.4}, such that $F$ exhibits $\ell$-wise asymptotic independence but not $(\ell+1)$-wise asymptotic independence and for any $x>0$,
  \begin{eqnarray*} \label{5.8}
        \lim_{\gamma\downarrow 0}\frac{\widehat C^{\Sigma}_{J^*}(x\gamma,\gamma,\ldots,\gamma)}{\widehat C^{\Sigma}_{J}(\gamma,\ldots,\gamma)}=1. 
  \end{eqnarray*}
    Hence, $F\in\mathcal{N}_{\ell}$ and by \eqref{eq:CTPlim} we have as well
 \begin{align*}
        \lim_{\gamma\downarrow 0}\CTP_{\gamma}(\bZ_{1|J}) & = \lim_{\gamma\downarrow 0}\frac{\widehat{C}_{J^*}^{\Sigma}(\gamma, \ldots, \gamma)}{\widehat{C}_{J}^{\Sigma}(\gamma, \ldots, \gamma)} =1,
    \end{align*}
    which we wanted to show.     
\end{enumerate} \vspace*{-0.85cm}
\end{proof}

\begin{proof}[Proof of \Cref{Lemma 5.3}]
By definition we have the relation $\mathcal{N}_k^*\subseteq \mathcal{N}_k \subseteq \mathcal{P}_k$. Since distributions in $\mathcal{N}_k^*$ have  a Pareto distributed margin in the first component, it remains to show that for any Gaussian copula $C^{\Sigma}$, where 
$\Sigma$ is a positive definite correlation matrix,
\begin{align} \label{AAa}
\sup_{\gamma\in(0,x^{-1}]}\frac{\widehat C_S^{\Sigma}(x\gamma,\gamma,\ldots,\gamma)}{\widehat C_S^{\Sigma}(\gamma,\gamma,\ldots,\gamma)}<\infty
\end{align}
for all  $S\subseteq \mathbb{I}_d,$ and for all $ x\geq 1$. However, a conclusion from  \Cref{prop:gcsurvcop}
is that for any $S\subseteq \mathbb{I}_d,$ there exists a   constant $h_1^S\geq 0$ (where $h_1^S=0$ if $1\notin I_S$) so that for any $x>0$,
$$ 
    \lim_{\gamma\downarrow 0}\frac{\widehat C_S^{\Sigma}(x\gamma,\gamma,\ldots,\gamma)}{\widehat C_S^{\Sigma}(\gamma,\gamma,\ldots,\gamma)}=x^{h_1^S}
$$
implying \eqref{AAa}.
\end{proof}

\begin{proof}[Proof of \Cref{Theorem 5.4}]
First, note that 
\begin{eqnarray*}
    \text{CoVaR}_{\gamma_1,\gamma_2}
    (\bZ_{1|J})
    &=&\inf\{z\in\R_+:
    \P(Z_1>z\vert Z_j>\VaR_{\gamma_2}(Z_j),\,\forall j\in J)\leq \gamma_1\}\\
    &=&\VaR_{\gamma_2}(Z_1)\inf\{z\in\R_+:
    \P(Z_1>z\VaR_{\gamma_2}(Z_1)\vert Z_j>\VaR_{\gamma_2}(Z_j),\,\forall j\in J)\leq \gamma_1\}.
\end{eqnarray*}
Suppose  $Z_1$ is Pareto$(\alpha)$-distributed, $\alpha>0$. 
Then the previous equation  reduces to
\begin{eqnarray} \label{m1}
    \text{CoVaR}_{\gamma_1,\gamma_2}
    (\bZ_{1|J})
    =\VaR_{\gamma_2}(Z_1)\inf\left\{z\in\R_+:
    \frac{\widehat C_{J^*}(z^{-\frac{1}{\alpha}} \gamma_2,\gamma_2,\ldots,\gamma_2)}{\widehat C_{J}(\gamma_2,\ldots,\gamma_2)}\leq \gamma_1\right\},
\end{eqnarray}
where $J^*=J\cup\{1\}$.
\begin{enumerate}[(a)]
    \item Suppose $F\in\mathcal{P}_k^*$ and $k\in\{\ell+1,\ldots,d\}$. Let $\epsilon\in(0,\gamma_1)$ and 
    \begin{align}\label{eq:K}
    K&:=\sup_{\gamma\in(0,\epsilon^{1/\alpha}]}\frac{\widehat C_{J^*}(\epsilon^{-1/\alpha}\gamma,\gamma,\ldots,\gamma)}{\widehat C_{J^*}(\gamma,\gamma,\ldots,\gamma)},
    \end{align}
     which is finite for $F\in\mathcal{P}_k^*$ by the definition of $\mathcal{P}_k^*$.
    Furthermore, $F\in\mathcal{P}_k^* {\subseteq} \mathcal{P}_k$ implies that there exists a $\gamma_0(\epsilon)\in(0,\gamma_1)$ such that
    \begin{eqnarray}
        \frac{\widehat C_{J^*}(\gamma,\gamma,\ldots,\gamma)}{\widehat C_{J}(\gamma,\gamma,\ldots,\gamma)}\leq\frac{\epsilon}{K}, \quad \forall\, \gamma\in(0,\gamma_0(\epsilon)). \label{eq:epbyK}
    \end{eqnarray}
    Therefore from \eqref{eq:K} and \eqref{eq:epbyK},  for all $0< \gamma_2<\min(\epsilon^{1/\alpha},\gamma_0(\epsilon))$ we have
    \begin{eqnarray*}
        \frac{\widehat C_{J^*}(\epsilon^{-\frac{1}{\alpha}}{\gamma_2},\gamma_2,\ldots,\gamma_2)}{\widehat C_{J}(\gamma_2,\ldots,\gamma_2)}=\frac{\widehat C_{J^*}(\epsilon^{-\frac{1}{\alpha}}{\gamma_2},\gamma_2,\ldots,\gamma_2)}{\widehat C_{J^*}(\gamma_2,\ldots,\gamma_2)}\frac{\widehat C_{J^*}(\gamma_2,\ldots,\gamma_2)}{\widehat C_{J}(\gamma_2,\ldots,\gamma_2)}\leq K\cdot \frac{\epsilon}{K}<\gamma_1,
    \end{eqnarray*}
    and finally, using \eqref{m1}, we get
    \begin{eqnarray*}        
     \frac{\text{CoVaR}_{\gamma_1,\gamma_2}(\bZ_{1|J})}{\VaR_{\gamma_2}(Z_1)}\leq \epsilon.
    \end{eqnarray*}
    Since $\epsilon\in(0,\gamma_1)$ is arbitrary, this results in
    $$\lim_{\gamma_2\downarrow 0} 
     \frac{\text{CoVaR}_{\gamma_1,\gamma_2}(\bZ_{1|J})}{\VaR_{\gamma_2}(Z_1)}=0.$$
     Finally, from \Cref{Lemma 5.3} we already know that $\mathcal{N}_k^*\subseteq \mathcal{P}_k^*$, thus the result is true for $\mathcal{N}_k^*$ as well.
     \item We will construct a $\bZ\sim F\in\mathcal{N}_{\ell}^*$,  so that
\begin{eqnarray*}
    \lim_{\gamma_2\downarrow 0}\frac{\text{CoVaR}_{\gamma_1,\gamma_2}(\bZ_{1|J})}{\VaR_{\gamma_2}(Z_1)}=\infty,
\end{eqnarray*}
which shows the statement.

     To this end, we will choose $\bZ\sim F$ which has a Gaussian copula
   $C^{\Sigma}$ with positive-definite correlation matrix $\Sigma$ as in \Cref{Theorem 4.4}, such that $F$ exhibits $\ell$-wise asymptotic independence but not $(\ell+1)$-wise asymptotic independence and for any $x>0$,
  \begin{eqnarray} \label{5.8}
        \lim_{u\downarrow 0}\frac{\widehat C^{\Sigma}_{J^*}(xu,u,\ldots,u)}{\widehat C^{\Sigma}_{J}(u,\ldots,u)}=1. 
  \end{eqnarray}
  Additionally, suppose that the margin $F_1$ is Pareto($\alpha$)-distributed. 
  Then $F\in\mathcal{N}_{\ell}^*\subseteq \mathcal{N}_{k}^*\subseteq \mathcal{P}_{k}^*$ for $k\in\{2,\ldots,\ell\}$.
 Due to \eqref{5.8}, for any $M>0$ there exists an $\gamma_0(M)\in(0,1)$ such that
\begin{eqnarray*}
 \frac{\widehat C^{\Sigma}_{J^*}(M^{-\frac{1}{\alpha}}\gamma_2,\gamma_2,\ldots,\gamma_2)}{\widehat C^{\Sigma}_{J}(\gamma_2,\ldots,\gamma_2)}>\frac{\gamma_1+1}{2}, \quad \forall\, \gamma_2\in(0,\gamma_0(M)).
\end{eqnarray*}
From this, we get that $\forall\, \gamma_2\in(0,\gamma_0(M))$,
 \begin{eqnarray*}
    \frac{\text{CoVaR}_{\gamma_1,\gamma_2}(\bZ_{1|J})}{\VaR_{\gamma_2}(Z_1)}
    &=&\inf\left\{z\in\R_+:
    \frac{\widehat C_{J^*}(z^{-\frac{1}{\alpha}}\gamma_2,\gamma_2,\ldots,\gamma_2)}{\widehat C_{J}(\gamma_2,\ldots,\gamma_2)}\leq \gamma_1\right\}\geq M,
\end{eqnarray*}
implying 
 \begin{eqnarray*}
    \liminf_{\gamma_2\downarrow 0}\frac{\text{CoVaR}_{\gamma_1,\gamma_2}(\bZ_{1|J})}{\VaR_{\gamma_2}(Z_1)}
    \geq M.
\end{eqnarray*}
Since $M>0$ is arbitrary, we have
\begin{eqnarray*}
    \lim_{\gamma_2\downarrow 0}\frac{\text{CoVaR}_{\gamma_1,\gamma_2}(\bZ_{1|J})}{\VaR_{\gamma_2}(Z_1)}=\infty,
\end{eqnarray*}
exhibiting the desired property for our chosen $F$ and, hence, proving the result.
\end{enumerate} \vspace*{-0.7cm}
\end{proof}

\end{document}